\let\cl@chapter\undefined
\newtheorem{defn}{Definition}[section]
\newtheorem{thm}[defn]{Theorem}
\newtheorem{prop}[defn]{Proposition}
\newtheorem{cor}[defn]{Corollary}
\newtheorem{lem}[defn]{Lemma}
 \def\FF{{\mathbb F}}
\DeclareMathOperator{\Mult}{Mult}
\DeclareMathOperator{\Aut}{Aut}
\def\F{\mathbb F}
\begin{document}

\title{Algebraic Properties of Subquasigroups and Construction of Cryptographically Suitable Finite Quasigroups
}

\titlerunning{Construction of Cryptographically Suitable Finite Quasigroups}        

\author{V.A. Artamonov        \and
        Sucheta Chakrabarti \and
        Sharwan K. Tiwari \and
        V.T. Markov
}


\institute{V.A. Artamonov \at
              Department of Algebra, Faculty of Mechanics and Mathematics, Moscow
 State University 
              \email{artamon@mech.math.msu.su}           
           \and
           Sucheta Chakrabarti \at
              Scientific Analysis Group, DRDO, Delhi\\
              \email{suchetadrdo@hotmail.com, suchetachakrabarti@sag.drdo.in}           
           \and
           Sharwan K. Tiwari \at
              Scientific Analysis Group, DRDO, Delhi\\
              \email{shrawant@gmail.com, sktiwari@sag.drdo.in}           
           \and
           V.T. Markov \at
              Department of Algebra, Faculty of Mechanics and Mathematics, Moscow
 State University 
}


\maketitle

\begin{abstract}
In this paper, we identify many important properties and develop criteria for the existence of subquasigroups in  finite quasigroups. Based on these results, we propose an effective method that concludes the nonexistence of subquasigroup of a finite quasigroup, otherwise finds its all possible proper subquasigroups.  
This has an important application in checking the cryptographic suitability of  a quasigroup. 

Further, we propose a binary operation using arithmetic of finite fields to construct quasigroups of order $p^r$. We develop the criteria under which these quasigroups have desirable cryptographic properties, viz. polynomially completeness and possessing no proper subquasigroups. Then a practical method is given to construct cryptographically suitable quasigroups. We also illustrate these methods by some academic examples and implement all proposed algorithms in the computer algebra system {\sc{Singular}}.  
\keywords{Quasigroups \and Subquasigroups  \and Polynomial Completeness \and Cryptography}
\end{abstract}

 \section{Introduction}
Identifying some algebraic properties in quasigroups, such as polynomially completeness, having no proper subquasigroups, is vital for analyzing and designing quasigroup-based crypto primitives and schemes. Testing these properties has a two-fold advantage; on the one hand, we can find cryptographically suitable quasigroup for the design. On the other hand, the robustness of already  existing quasigroup based crypto schemes can be checked.  

Quasigroup based cryptography is an emerging research area for efficient secure communication. It has an advantage in resource constraint environments such as smart cards, RFID systems etc. 
The nonassociative and noncommutative properties of the quasigroup structure make them useful in the crypto designs,  because many well-known cryptanalysis techniques to attack these crypto designs  would not be directly applicable. Quasigroup is a well-studied algebraic combinatoric structure, see in \cite{Belyavskaya1989, Belyavskaya1992, Keedwell2015, Denes1991, kepka1978, kepka1971}. However, its intensive use by the crypto community can be traced back only in the last two decades, see \cite{Belyavskaya1994, viktor2017, smith2007}. The significant properties of quasigroups for the crypto designs are polynomial completeness (simplicity, non-affineness), degree of non-commutativity and non-associativity, non-existence of proper subquasigroup, see \cite{Artamonov2012, Artamonov2013, Artamonov2017, Galatentko2018, Otokar2012}.

Quasigroup string transformations are one of the fundamental building blocks in the design of quasigroup based crypto algorithms and the security of these transformations mainly depends on the proper choice of quasigroups \cite{Artamonov2016, Artamonov2020, smile1997, Markovski1999}.  In particular, the choice of quasigroup affects the rate of growth of the period and randomness of the output of quasigroup string transformations \cite{Dimitrova04}.  A significant period and randomness provide good security to the crypto algorithms. Furthermore, these properties ensure that the search space for the brute-force attack can not be reduced; and the problem of solving an equivalent algebraic system of such quasigroups based crypto algorithm is NP-Complete \cite{Horv2008}. 
  
Developing efficient methods to check and construct cryptographically suitable quasigroups of  finite order is ongoing research. In this paper, we work on two important problems related to the quasigroup-based cryptography. At first, we develop methods to conclude the non-existence of proper subquasigroup and then propose a class of cryptographically suitable quasigroups of finite order $p^r$. In Section \ref{s1}, we revise the main definitions and properties of quasigroups used in this paper. Then, in Section \ref{s2}, we prove some important properties of subquasigroups. Based on these results, we propose an effective algorithm to determine all the possible proper subquasigroups of any given finite quasigroup, otherwise deduce the non-existence of any proper subquasigroup (the desirable property for crypto schemes). In section \ref{s3}, we propose  a method for the construction of cryptographically suitable finite quasigroup of order $p^r$. At first, we introduce a quasigroup binary operation based on the arithmetic of finite fields and then develop the criteria under which these quasigroups are polynomially complete and  possess no subquasigroups.  We then finalize all these findings in an effective algorithm to construct quasigroups of given finite orders for any prime $p$ and integer $r$. In particular, the algorithm constructs the cryptographically desirable quasigroups of order $2^r$. Finally, the given methods are elaborated by their implementation in Singular \cite{singular} on some examples.

 \section{ Preliminaries}\label{s1}
 In this section, we introduce some of the terminology used in this paper.\\
 A \emph{quasigroup} is a set $Q$ with a binary operation of multiplication
  $xy$ such that for all $a,b\in Q$ the equations 
 $
 ax=b,\quad ya=b
 $
 have unique solutions 
 $
 x=a\diagdown b ,\quad y=b \diagup a.
 $
 Then the class of quasigroups form a variety of algebras with three operations 
 $xy,\; x\diagdown y ,\quad x \diagup y$ 
  which is defined by identities
 \begin{eqnarray} \label{eq1}
 (xy )\diagup y=x=  (x \diagup y)y , 
 \quad 
x \diagdown (xy)=y=  
  x(x\diagdown y).
 \end{eqnarray}

Each quasigroup $Q$ can be given by a Latin square 
\begin{eqnarray} \label{eqls1}
  \begin{array}{ c|ccc}
  \cdot & x_1 & \hdots & x_n\\
  \hline
 x_1 &   a_{11}  & \hdots & a_{1n} \\ 
\vdots &  \hdots & \hdots  &  \hdots\\
 x_n &   a_{n1}   & \hdots & a_{nn} \\
  \end{array}
  \end{eqnarray}
  of size $n$.
  The elements of $Q$ are   $\{x_1,\ldots,x_n\}$,
  each entry $a_{ij}$ stands for the product $x_iy_j$ in the quasigroup $Q$.
  
Let $x\cdot y, x*y$ be two quasigroup multiplications    on a set $Q$.
 We say that  multiplication $x*y$ is an
   \textit{isotope} of multiplication $x\cdot y$ if
there exist permutations $\pi, \pi_1,\pi_2$ on $Q$ such that 
\begin{eqnarray} \label{isot}
x*y = \pi \left(\pi_1^{-1}(x)\cdot \pi_2^{-1}(y)\right)
\end{eqnarray}
for all $x,y\in Q$.

 In terms of the Latin square  \eqref{eqls1} it means that  we replace it by the square 
 \begin{eqnarray*} \label{eqls0}
  \begin{array}{c|ccc} 
  \ast&  x_1   & \hdots &   x_n \\
  \hline
 x_1 &   b_{11}  & \hdots & b_{1n} \\ 
\vdots &  \hdots & \hdots  &  \hdots\\
 x_n  &   b_{n1}   & \hdots & b_{11} \\
  \end{array}
  \end{eqnarray*}
 where 
 $$
 b_{ij} = \pi  \left(\pi_1^{-1}(x_i)\cdot \pi_2^{-1}(x_j)\right)=
 \pi \left(a_{\pi_1^{-1}(x_i),\pi_2^{-1}(x_j)}\right).
 $$
 In other words, we   rearrange columns and  rows of $(Q,\cdot)$ using a permutations
  $\pi_2$ and $\pi_1$, respectively, and  afterwards  permute elements
   of the obtained   Latin square using $\pi^{-1}$.

The left and right multiplications by an element, namely 
$$L_{x}(y)=x\ast y \text{ and } R_y(x)=x\ast y$$
induce permutations on $Q$. Note that a left multiplication $L_{x_i}$  is the permutation of $Q$ represented by the $i^{th}$ row of Latin square given in (\ref{eqls1}) and denoted by $\sigma_i$. Similarly, $\tau_j$  represents the column permutation of (\ref{eqls1}) induced by right multiplication $R_{x_j}$. 
\\

The multiplication group Mult(Q) on a quasigroup $(Q,\ast)$ of order $n$ is a permutation group generated by the all row and column permutations of the Latin square of $Q$. That is,
\[\Mult(Q)=\langle \sigma_i, \tau_i \mid 1\leq i\leq n\rangle\]  
We denote by $G(Q)$, the following subgroup of $\Mult(Q)$,
\[G(Q)=\langle \sigma_i\sigma_{j}^{-1}, \tau_i\tau_j^{-1} \mid 1\leq i,j\leq n\rangle\]  

\begin{defn}
A finite quasigroup $(Q,\ast)$ is  affine if $Q$   is  equipped with a structure of additive Abelian group $(Q,+)$ such that, for each $x,y\in Q$, 
$$x\ast y=\alpha x+\beta y +c, ~~ \text{where } c\in Q$$
and $ \alpha,\beta $ are automorphisms of $(Q,+)$.

\end{defn}
\begin{defn}
A finite quasigroup  is simple if  it has only trivial congruences.
\end{defn}
\begin{thm}\cite{Artamonov2013, Hagemann1982}
A finite quasigroup  is polynomially complete if and only if it is simple and non-affine. 
\end{thm}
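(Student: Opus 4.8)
The plan is to reduce the statement to the commutator-theoretic structure theory of finite algebras, exploiting that quasigroups form a congruence-permutable (Mal'cev) variety. The first step I would record is that the variety of quasigroups admits a Mal'cev term built from the three operations $xy,\ x\diagdown y,\ x\diagup y$, so every quasigroup is congruence permutable; this places $Q$ within the scope of the modular commutator theory and of tame congruence theory.

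For the forward implication (polynomial completeness $\Rightarrow$ simple and non-affine) I would argue directly at the level of clones. If $Q$ carried a nontrivial congruence $\theta$, every polynomial of $Q$ would preserve $\theta$; but polynomial completeness makes every finitary operation a polynomial, and whenever $|Q|\ge 2$ one can exhibit an operation failing to preserve $\theta$ (for instance a unary map separating two distinct $\theta$-related elements). Hence $Q$ is simple. Likewise, if $Q$ were affine, then each polynomial of $Q$ would be an affine map $x\mapsto \sum_i \gamma_i x_i + d$ over the associated abelian group, and such maps form a proper subclone of all operations once $|Q|\ge 2$; this again contradicts polynomial completeness, so $Q$ is non-affine.

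The substance lies in the converse: a finite simple non-affine quasigroup is polynomially complete. Here I would invoke the dichotomy, valid in any congruence-modular (in particular congruence-permutable) variety, that a finite simple algebra is either abelian or polynomially complete. In the Mal'cev setting an abelian simple algebra is affine --- it is polynomially equivalent to a module, which is exactly affineness in the sense defined above --- by the affineness theorem for abelian algebras in modular varieties. Thus, assuming $Q$ simple and non-affine, $Q$ is not abelian, and the dichotomy forces polynomial completeness.

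The main obstacle is precisely this dichotomy and the affineness theorem underlying it: showing that a finite simple non-abelian algebra in a congruence-permutable variety is polynomially complete requires the modular commutator and the classification of minimal-set types --- only types $\mathbf{2}$ (affine) and $\mathbf{3}$ (Boolean) occur in the permutable case --- which is the content of the cited structural results. I would therefore treat this deep input as a black box supplied by the citation, and spend the genuine effort only on verifying the Mal'cev term and the two clone-theoretic obstructions of the forward direction.
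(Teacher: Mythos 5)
The paper does not prove this theorem at all: it is imported verbatim from the cited references \cite{Artamonov2013, Hagemann1982}, so there is no in-paper argument to compare against. Your outline --- a Mal'cev term such as $p(x,y,z)=\bigl(x\diagup(y\diagdown y)\bigr)\bigl(y\diagdown z\bigr)$ giving congruence permutability, the easy clone-theoretic obstructions for the forward direction, and the abelian-versus-polynomially-complete dichotomy together with the affineness theorem for abelian algebras in Mal'cev varieties for the converse --- is precisely the standard argument supplied by those references and is correct; the only step worth making explicit is that ``polynomially equivalent to a module'' really does yield the paper's specific normal form $x*y=\alpha x+\beta y+c$ with $\alpha,\beta$ automorphisms of $(Q,+)$, which follows because the quasigroup axioms force the two translations, and hence $\alpha$ and $\beta$, to be bijective.
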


\section{ Subquasigroups}\label{s2}

\begin{prop} \label{sq}
Let $Q$ be a finite quasigroup and $H$ be a subset of $Q$ closed under
multiplication. Then $H$ is a subquasigroup in $Q$.
\end{prop}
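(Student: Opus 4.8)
The plan is to reduce the claim to a statement about restrictions of the left and right translation maps. Recall that $H$ being a \emph{subquasigroup} means that $H$ is closed not only under the multiplication $xy$ but also under the two division operations $x\diagdown y$ and $x\diagup y$ appearing in \eqref{eq1}; equivalently, that $(H,\cdot)$ is itself a quasigroup. Since closure under multiplication is given by hypothesis, the entire task is to show that for all $a,b\in H$ we have $a\diagdown b\in H$ and $b\diagup a\in H$.

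The main step I would carry out uses finiteness together with the bijectivity of translations. Fix $a\in H$ and consider the left multiplication $L_a\colon Q\to Q$, $y\mapsto ay$, which is a permutation of $Q$ by the defining property of a quasigroup. Closure of $H$ under multiplication says exactly that $L_a(H)\subseteq H$, so $L_a$ restricts to a map $L_a|_H\colon H\to H$. This restriction is still injective, being a restriction of an injective map, and since $H$ is \emph{finite}, an injective self-map of $H$ is automatically surjective. Hence $L_a|_H$ is a bijection of $H$. Consequently, for every $b\in H$ there is a (unique) $x\in H$ with $ax=b$; by \eqref{eq1} this $x$ is precisely $a\diagdown b$, so $a\diagdown b\in H$.

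The right-division case is entirely symmetric: fixing $a\in H$ and working with the right multiplication $R_a\colon x\mapsto xa$ in place of $L_a$, the same injectivity-plus-finiteness argument shows $R_a|_H$ is a bijection of $H$, whence $b\diagup a\in H$ for every $b\in H$. Combining the two cases with the hypothesized closure under multiplication shows that $H$ is closed under all three quasigroup operations, i.e.\ that $H$ is a subquasigroup.

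I expect the only genuinely delicate point to be the use of finiteness: the step ``an injective self-map of a finite set is surjective'' is exactly where the hypothesis that $Q$ (and hence $H$) is finite is indispensable. Indeed, over an infinite quasigroup the statement fails --- for instance $\mathbb{N}\subset(\mathbb{Z},+)$ is closed under addition but not under subtraction --- so any correct proof must invoke finiteness at this juncture, and the challenge is merely to locate it cleanly rather than to overcome a deep obstruction.
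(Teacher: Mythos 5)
Your proof is correct and follows essentially the same route as the paper's: both exploit finiteness to show that the restricted left translation $L_a|_H$ is a bijection of $H$, the paper doing so by writing $L_a^{-1}=L_a^{d-1}$ for $d$ the order of the permutation $L_a$, and you by observing that an injective self-map of a finite set is surjective. The two mechanisms are interchangeable, and your explicit treatment of the right-division case (which the paper dispatches with ``similarly'') and your remark on where finiteness is indispensable are both accurate.
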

\begin{proof}
 Let $a,b\in H$. We need to prove that the equations $ax=b,\, ya=b$ have solutions in $H$.
 Note that the   left multiplication  $L_a$ maps $H$ into $H$.
  The map $L_a$ has a finite order $d$ in $Q$. Hence $L^{-1}_a=L_a^{d-1}$,
 Therefore $H$ contains the element
 $$
 L_a^{d-1}b = L^{d-1}_a(ax)=L^d_ax=x.
 $$ 
Similarly $y\in H$.
\end{proof}

Let a quasigroup $Q$ with elements $x_1,\ldots, x_n$  contains a subquasigroup 
$W$ with elements $x_1,\ldots,x_k$. Then the  Latin square \eqref{eqls1} of $Q$ has the form 
\begin{eqnarray} \label{eqls1s}
  \begin{array}{c|cccccc} 
 \ast & x_1 & \hdots   & x_k & x_{k+1} & \hdots  & x_n\\
  \hline
 x_1 &   a_{11}  & \hdots      & a_{1k} & a_{1,k+1} & \hdots  & a_{1n} \\ 
\vdots &  \hdots & \hdots    & \hdots & \hdots  & \hdots       &  \hdots\\
 x_k &   a_{k1}  & \hdots      & a_{kk} & a_{k,k+1} & \hdots  & a_{kn} \\ 
 x_{k+1} &   a_{k+1,1}  & \hdots      & a_{k+1,k} & a_{k+1,k+1} & \hdots  & a_{k+1,n} \\ 
\vdots &  \hdots & \hdots    & \hdots & \hdots  & \hdots       &  \hdots\\
 x_n &   a_{n1}   & \hdots      &  a_{nk} & a_{n,k+1} & \hdots & a_{nn} \\
  \end{array}\; 
  \end{eqnarray}
where 
$$
a_{ij} \in \begin{cases} 
 W ,   &   \text{ if } 1\leqslant i,j\leqslant k;\\
 Q\setminus W,  &  \text{ if  either } i>k, \, j\leqslant k, \text{ or } j>k ,\, i\leqslant k.
 \end{cases}
 $$
 It follows immediately that  $W$  is contained in each
 column and each row
 $$
 \begin{pmatrix}
 a_{k+1,j}\\
 \vdots \\
 a_{n,j}
 \end{pmatrix}, \quad 
 \begin{pmatrix}
 a_{j,k+1} & \hdots & a_{jn}
 \end{pmatrix}
  \text{ for any } j=k+1,\ldots,n.
  $$
  In particular $n-k\geqslant |W|$.
  \begin{prop}[\cite{Wall1957}] \label{sq11}
  Let $W$ be a subquasigroup in $Q$. Then $|W|\leqslant \frac{|Q|}{2}.$
  \end{prop}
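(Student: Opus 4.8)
The plan is to reduce the inequality to a counting argument on the Latin square \eqref{eqls1s}. Write $k=|W|$ and $n=|Q|$, and arrange the elements of $Q$ so that $W=\{x_1,\dots,x_k\}$ occupies the first $k$ rows and columns, giving the block decomposition in \eqref{eqls1s}. The goal $|W|\leqslant |Q|/2$ is equivalent to $n\geqslant 2k$, so it suffices to show $n-k\geqslant k$, i.e. to exhibit at least $k$ elements lying outside $W$.

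First I would pin down where the elements of $W$ can occur. Since $W$ is closed under multiplication, $a_{ij}\in W$ for $1\leqslant i,j\leqslant k$. The crucial observation is that the two off-diagonal blocks lie entirely in $Q\setminus W$: if $i\leqslant k<j$ and we had $a_{ij}=x_i\ast x_j\in W$, then the equation $x_i\ast y=a_{ij}$ would have its unique solution $y=x_j$; but $x_i,a_{ij}\in W$ and $W$ is a subquasigroup, so this equation already has a solution inside $W$, forcing $x_j\in W$, a contradiction. The symmetric argument, using right division, handles the case $j\leqslant k<i$.

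Now fix any column $j$ with $k<j\leqslant n$. By the previous step its top $k$ entries $a_{1j},\dots,a_{kj}$ all lie in $Q\setminus W$, so none of them equals an element of $W$. Since a column of a Latin square is a permutation of $Q$, every element of $W$ must occur among the remaining $n-k$ entries $a_{k+1,j},\dots,a_{nj}$. Hence $k=|W|\leqslant n-k$, which is exactly the bound $n-k\geqslant|W|$ recorded just before the statement. Rearranging gives $n\geqslant 2k$, that is $|W|\leqslant|Q|/2$.

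The argument is short, and the only point genuinely needing care is the second step, the claim that the off-diagonal blocks avoid $W$, since everything else is immediate Latin-square bookkeeping. I expect that step to be the main (mild) obstacle: it must invoke both the uniqueness of the solution of $x_i\ast y=b$ in $Q$ and the existence of a solution inside $W$ guaranteed by $W$ being a subquasigroup, and one has to treat rows and columns symmetrically so that both off-diagonal blocks are covered.
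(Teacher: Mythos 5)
Your proof is correct and follows essentially the same route as the paper: the paper derives the bound from the block structure of the Latin square \eqref{eqls1s}, noting that the off-diagonal blocks lie in $Q\setminus W$ so that each column $j>k$ must contain all of $W$ among its last $n-k$ entries, whence $n-k\geqslant|W|$. Your write-up merely makes explicit the justification (via uniqueness of division) for why the off-diagonal blocks avoid $W$, which the paper asserts without detail.
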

 \begin{prop} \label{sq4}
 Let $Q$ be a quasigroup of order 4. Then the following are equivalent:
 \begin{itemize}\vspace{-2mm}
 \item[1.] \label{cq1}
 $Q$ has a subquasigroup  $W$ such that $1< |W|<4$;
 \item[2.] \label{cq2}
 $W$ is a congruence class of order 2 and  therefore $Q$ is not simple.
 \end{itemize}
\end{prop}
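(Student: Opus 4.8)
The plan is to prove the two implications separately, after a common reduction. Since $1<|W|<4$ forces $|W|\in\{2,3\}$, and Proposition~\ref{sq11} gives $|W|\leqslant |Q|/2=2$, every proper nontrivial subquasigroup of $Q$ has exactly two elements; a congruence class of order $2$ of course also fixes $|W|=2$. So throughout I work with $|W|=2$ and $|Q\setminus W|=2$.

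For $1\Rightarrow 2$, I would exploit the block structure of the Latin square \eqref{eqls1s} with $k=2$, $n=4$. The top-left $2\times2$ block lies in $W$ by subquasigroup closure, and the two off-diagonal blocks lie in $Q\setminus W$ by the case analysis preceding Proposition~\ref{sq11}. The only thing left to pin down is the bottom-right block: in each of the last two rows the first two entries already exhaust $Q\setminus W$, so the remaining two entries of that row must be exactly the elements of $W$. Hence the partition $\{W,\,Q\setminus W\}$ satisfies $W\cdot W\subseteq W$, $W\cdot(Q\setminus W)\subseteq Q\setminus W$, $(Q\setminus W)\cdot W\subseteq Q\setminus W$ and $(Q\setminus W)\cdot(Q\setminus W)\subseteq W$. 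Thus the surjection $\varphi\colon Q\to\Z_2$ sending $W\mapsto 0$ and $Q\setminus W\mapsto 1$ respects multiplication, and since a multiplicative surjection of quasigroups automatically respects both divisions (if $ax=b$ then $\varphi(a)\varphi(x)=\varphi(b)$, whence $\varphi(a\diagdown b)=\varphi(a)\diagdown\varphi(b)$, and symmetrically for $\diagup$), $\varphi$ is a quasigroup homomorphism. Its kernel is a nontrivial congruence whose classes are exactly $W$ and $Q\setminus W$, so $W$ is a congruence class of order $2$ and $Q$ is not simple.

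For $2\Rightarrow 1$, I start from a nontrivial congruence $\theta$ having $W$ as a class with $|W|=2$. The key structural input is that all classes of a quasigroup congruence are equinumerous: each left translation $L_a$ is a $\theta$-preserving bijection of $Q$ (as is its inverse, left division by $a$), so $L_a$ restricts to a bijection between classes, and since $\{ax:a\in Q\}=Q$ for fixed $x$ every class is reached; hence all classes have size $2$ and $\theta$ has precisely the two classes $W$ and $Q\setminus W$. The quotient $Q/\theta$ is then a quasigroup of order $2$, which possesses an (idempotent) identity element $e$. The $\theta$-class mapping to $e$ under the projection $Q\to Q/\theta$ is closed under multiplication, so by Proposition~\ref{sq} it is a subquasigroup of order $2$, giving statement~$1$.

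The main obstacle is exactly the subtlety hidden in $2\Rightarrow 1$: a congruence class of order $2$ need not itself be closed under multiplication (in $\Z_2$ one has $1\cdot 1=0$), so one cannot simply declare the given class $W$ to be a subquasigroup. The subquasigroup one must produce is the class lying over the identity of the order-$2$ quotient, and locating it is precisely where the classification of order-$2$ quasigroups and the equinumerosity of congruence classes enter. Apart from this point, both directions are routine once the block structure of \eqref{eqls1s} and the homomorphism $\varphi$ are in hand.
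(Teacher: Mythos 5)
Your proof is correct, and the forward direction $1\Rightarrow 2$ is essentially the paper's own argument: both reduce to $|W|=2$ via Proposition~\ref{sq11}, read off the block structure of the Latin square, and conclude that the two-block partition is a congruence with quotient isomorphic to $\Z_2$. Your pigeonhole remark makes explicit why the bottom-right $2\times 2$ block must land in $W$ (a step the paper asserts without comment), and your observation that a multiplicative surjection onto a quasigroup automatically respects the two divisions is a worthwhile detail the paper omits. Where you genuinely diverge is the converse: the paper disposes of $2\Rightarrow 1$ in one line by citing Theorem~7 of \cite{Artamonov2013}, whereas you give a self-contained argument --- equinumerosity of congruence classes forces exactly two classes of size $2$, the order-$2$ quotient has an identity, and the fibre over that identity is multiplicatively closed, hence a subquasigroup by Proposition~\ref{sq}. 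This buys two things. First, the proof becomes independent of the external reference. Second, it surfaces a real subtlety that the paper's phrasing hides: the congruence class $W$ named in statement~2 need not itself be the subquasigroup (in $\Z_4$ the class $\{1,3\}$ of the congruence modulo $\{0,2\}$ is a congruence class of order $2$ but is not closed under addition), so the implication $2\Rightarrow 1$ only holds if statement~1 is read existentially, and the subquasigroup one actually produces is the class over the identity of the quotient. Your handling of that point is the right one.
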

\begin{proof}
Let $1< \lvert W\rvert<4$. Then $|W|=2$ by  Proposition \ref{sq11}.
Hence, $W=\{x_i,x_j\}$ is a cyclic group of order 2. 
\\
Let $W=\{x_i,x_j\}\subset Q=\{x_i,x_j,x_u,x_v\}$. The Latin square of $Q$ is 
\begin{eqnarray*} \label{eqls1}
  \begin{array}{ c|cc|cc}
  \cdot & x_i &x_j&x_u&x_v\\
  \hline
 x_i &   a_{ii}  & a_{ij}&a_{iu}&a_{iv} \\ 
 x_j &   a_{ji}  & a_{jj}&a_{ju}&a_{jv} \\  \hline
 x_u &   a_{ui}  & a_{uj}&a_{uu}&a_{uv} \\
 x_v &   a_{vi}  & a_{vj}&a_{vu}&a_{vv} \\ 
  \end{array}
  \end{eqnarray*}
Clearly, upto rearrangement of $x_i$ and $x_j$, we have 
$$
x_i^2=x_i,~ x_ix_j=x_jx_i=x_j, ~x_j^2=x_i,~ i.e. ~ a_{it}, a_{jt}\in W ~ \text{for } t=i,j
$$ 
Therefore, 
$$\{x_i,x_j\}\cdot \{x_u,x_v\} ~\& ~\{x_u,x_v\}\cdot \{x_i,x_j\} \in \{x_u,x_v\} ~\text{ and }~\{x_u,x_v\}\cdot \{x_u,x_v\}\in \{x_i,x_j\}$$
Let us consider a partition of $Q$, $\mathcal{P}=\big\{\{x_i,x_j\}, \{x_u,x_v\}\big\}$. Now, we have 
\begin{eqnarray*}
\{x_i,x_j\}\cdot \{x_i,x_j\}&=&\{x_i,x_j\}=\{x_u,x_v\}\cdot \{x_u,x_v\},\\
\{x_u,x_v\}\cdot \{x_i,x_j\}&=&\{x_u,x_v\}=\{x_i,x_j\}\cdot \{x_u,x_v\},
\end{eqnarray*}
That is, $Q/\mathcal{P}$ is a group of order $2$ with the identity element $\{x_i,x_j\}$. Hence, the partition $\mathcal{P}$ is a congruence relation. 
  
Converse statement follows from \cite[Theorem 7]{Artamonov2013}.
\end{proof}  

\begin{thm}[Recognition of subquasigroups] \label{rsq}
	Let  $(Q=\{x_1,\ldots,x_n\},\ast)$ be a quasigroup of order $n$. Take 
	a subset $W=\{x_{i_1},\ldots,x_{i_k}\},\, k\leqslant \frac{n}2,$ in $Q$  which 
	is a union of orbits of any permutation $\sigma_{x_{i_j}}$ for   $j=1,\ldots, k$. 
	 $W$ is a subquasigroup in $Q$ if and only if 	  	$x_{i_j}\in W$ implies  $\sigma_{x_{i_t}}( x_{i_j})= x_{i_t}\ast x_{i_j} \in W$ for every $t=1,\ldots,k$.
\end{thm}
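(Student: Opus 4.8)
The plan is to deduce the statement from Proposition \ref{sq}, which already tells us that in a finite quasigroup a subset closed under multiplication is automatically a subquasigroup. The whole theorem then reduces to recognising that the stated condition is exactly multiplicative closure, merely phrased in the language of the row permutations $\sigma_{x_{i_t}}=L_{x_{i_t}}$.

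For the \emph{only if} direction I would argue straight from the definition: if $W$ is a subquasigroup it is closed under $\ast$, so for any $x_{i_t},x_{i_j}\in W$ we get $\sigma_{x_{i_t}}(x_{i_j})=x_{i_t}\ast x_{i_j}\in W$, which is precisely the asserted condition. Conversely, suppose $x_{i_t}\ast x_{i_j}\in W$ holds for every $t=1,\dots,k$ and every $x_{i_j}\in W$. Since $\{x_{i_1},\dots,x_{i_k}\}$ is all of $W$, letting both indices range over the indices of $W$ shows $W\ast W\subseteq W$, i.e.\ $W$ is closed under multiplication. Proposition \ref{sq} then finishes the job: for $a\in W$ the translations $L_a,R_a$ map $W$ into $W$, and because they have finite order their inverses are positive powers of themselves, so the solutions of $ax=b$ and $ya=b$ with $a,b\in W$ again lie in $W$; hence $W$ is a subquasigroup.

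It remains to reconcile this with the orbit hypothesis, and here I would invoke the elementary fact that a permutation $\sigma$ stabilises a finite set $W$ (that is, $\sigma(W)=W$, equivalently $W$ is a union of cycles, i.e.\ orbits, of $\sigma$) if and only if $\sigma(W)\subseteq W$. Thus the assertion that $W$ is a union of orbits of each $\sigma_{x_{i_j}}$ with $x_{i_j}\in W$ is literally the same as the checkable inclusions $\sigma_{x_{i_t}}(x_{i_j})\in W$; the two formulations coincide, and it is the permutation picture that makes the criterion cheap to test directly on the rows of the Latin square of $Q$. The bound $k\leqslant\frac{n}{2}$ plays no part in the equivalence itself and is only inherited from Proposition \ref{sq11} to delimit the sizes of candidate sets the algorithm must examine.

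I do not expect a genuine obstacle: the forward implication is immediate and the reverse one is a direct appeal to Proposition \ref{sq}. The single point requiring care is that closure under \emph{multiplication} alone suffices, so that one need not separately verify closure under the division operations $\diagdown$ and $\diagup$; this is exactly where finiteness (through the finite order of the translations) is indispensable, and it is the only step that would break down for infinite $Q$.
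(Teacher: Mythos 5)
Your proposal is correct and follows essentially the same route as the paper: the closure condition (equivalently, the orbit hypothesis) gives $W\ast W\subseteq W$, and finiteness via Proposition~\ref{sq} upgrades multiplicative closure to a genuine subquasigroup, while the converse is immediate from the definition. The only difference is cosmetic — you make the appeal to Proposition~\ref{sq} and the equivalence between ``union of orbits'' and $\sigma_{x_{i_t}}(W)\subseteq W$ explicit, where the paper leaves both implicit in its terser argument.
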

\begin{proof}
Let $x_{i_{j}}, x_{i_{j'}}\in W$.   Then     $x_{i_{j}}\ast x_{i_{j'}}=\sigma_{x_{i_{j}}}(x_{i_{j'}})\in W$
 because $\sigma_{x_{i_{j}}}(x_{i_{j'}})$ belongs to an orbit of $x_{i_{j'}}$ which is contained in  $W$.   Thus $W$ is a subquasigroup.
 
 Conversely if   $W$ is a subquasigroup in the quasigroups $Q$
  with the Latin square  \eqref{eqls1s}
  take $C_i$ as the set of enties of  $ i^{\text{th}}$ row 
  $\begin{pmatrix} a_{i1}  & \hdots & a_{ik}\end{pmatrix}$
  which is equal to $W$.
 Then all properties  
 are satisfied.
 \end{proof}
 Note that   the orbit of each $x_{i_j}$ with
  respect to $\sigma_{x_{i_t}}$ for all $t=1,\ldots,k$  is contained in $W$. Moreover $W$ is a union of all orbits
  $$\bigcup_{1\leqslant t,j \leqslant k,\, ~s\geqslant 0} (\sigma^s_{x_{i_t}}(x_{i_j})).$$
 
Now, we illustrate an efficient procedure to obtain all proper subqusigroups of a quasigroup $(Q,\ast)$ based on Theorem \ref{rsq} by means of some concrete examples.

\begin{example}\label{Example1}

Let us consider a quasigroup $(Q=\{1,2,\ldots,8\},\ast)$ of order $8$
with the following Latin square, $L$
\begin{align*}
\begin{array}{ c|cccccccc} 
  \ast &1&2&3&4&5&6&7&8\\ 
      \hline
       1&2&1&4&3&5&6&7&8\\  
       2&1&3&2&4&6&7&8&5\\  
       3&4&2&3&1&7&8&5&6\\  
       4&3&4&1&2&8&5&6&7\\  
       5&5&6&7&8&1&2&3&4\\  
       6&6&7&8&5&2&3&4&1\\  
       7&7&8&5&6&3&4&1&2\\
       8&8&5&6&7&4&1&2&3 
 \end{array}
\end{align*}
Let's consider the first  row permutation, say $\sigma_1$, of the Latin square $L$. We use two distinct sets, denoted by $\Sigma$ and $\Sigma'$, to store the indices of the row permutations. The set $\Sigma$ is composed of  all indices $j$, if $\sigma_j$ are being considered in the method. Whereas, the set $\Sigma'$ is composed of some particular indices (not by all) which will be illustrated later. Initially, $\Sigma'=\emptyset$ and since we are considering the first permutation, $\Sigma=\{1\}$. 
Now, we start by decomposing  $\sigma_1$ into disjoint cycles. If $O_{ij}$ denotes the $j^{th}$ cycle of $\sigma_i$ row permutation, then the cycles of $\sigma_1$ are
\[O_{11}=(1,2),~\, O_{12}=(3,4),~\, O_{13}=(5),~\, O_{14}=(6),~\, O_{15}=(7),~\, O_{16}=(8)\]
By Theorem \ref{rsq}, if a subset $W=\{i_1,i_2,\ldots,i_k\}\subset Q$ is a subquasigroup, then $W$ is a union of cycles of any permutation $\sigma_{i_j}$ such that $i_{j}\in O_{ik}\subset W$ and $\lvert W\rvert \leq \frac{\lvert Q\rvert}{2}$. So, first we locate the cycle, say $O_{ik}$, which has $i^{th}$ element of $Q$ and consider all possible subsets formed by the unions of cycles with $O_{ik}$ such that the cardinality of each subset is not grater than $\frac{\lvert Q\rvert}{2}$ and their elements do not belong to $\Sigma'$. Mathematically, we can express the set $\mathcal{P}$  of all possible candidates in the following way
\[\mathcal{P}=\Big\{W\in \mathcal{P'}\bigm| \lvert W\rvert +\lvert O_{ik}\rvert \leq n/2 \Big\}, \text{ where}\]
\[\mathcal{P'}=\text{PowerSet}\big\{O_{ij\mid j\neq k} \bigm| O_{ij}\subset Q\setminus \Sigma' ~\&~\lvert O_{ij}\rvert +\lvert O_{ik}\rvert \leq n/2   \big\}\] 
So in the above example, the set denoted by $\mathcal{P}$, of all possible candidates for being subquasigroup of $Q$ corresponding to $\sigma_1$, is given by
\begin{align*}
\begin{array}{c}
\hspace{-2mm}\mathcal{P}=\Big\{\{O_{11}\},\{O_{11}\cup O_{12}\}, \{O_{11}\cup O_{13}\},\{O_{11}\cup O_{14}\}, \{O_{11}\cup O_{15}\},\\ \{O_{11}\cup  O_{16}\}, 
\{O_{11}\cup O_{13}\cup O_{14}\},\{O_{11}\cup O_{13}\cup O_{15}\}, \{O_{11}\cup  O_{13} \\ \cup O_{16}\},
 \{O_{11}\cup O_{14}\cup O_{15}\},\{O_{11}\cup O_{14}\cup O_{16}\},\{O_{11}\cup O_{15}\cup O_{16}\} \Big\}
\end{array}
\end{align*}
i.e.
\begin{align*}
\begin{array}{c}
\mathcal{P}=\Big\{\{1,2\},\{1,2,3,4\}, \{1,2,5\},\{1,2,6\}, \{1,2,7\}, \{1,2,8\}, \{1,2,5,6\},\\
 \{1,2,5,7\}, \{1,2,5,8\}, \{1,2,6,7\}, 
 \{1,2,6,8\}, \{1,2,7,8\} \Big\}
\end{array}
\end{align*}

If a subset $W \in \mathcal{P}$ is closed under the operation $\ast$ then $W$ is a subquasigroup of $Q$. For instance in the ongoing example, only the subset $\{1,2,3,4\}$ of $\mathcal{P}$ is closed. So the list $\mathcal{L}$ of subquasigroup obtained by considering the permutation $\sigma_1$ is
\[\mathcal{L}=\Big\{ \{1,2,3,4\}\Big\}\]

Now, we find the subquasigroups consisting of only elements from the set $S=\bigcup_{W\in \mathcal{L}} W\setminus \Sigma=\{2,3,4\}$. That is, we need to consider only those orbits  of permutations $\sigma_j, j\in S$ for the possible subquasigroups which are subsets of $S$. We repeat the process until $S$ is empty.   

Here, on considering $2\in S$,
\[O_{21}=(1),~\, O_{22}=(2,3),~\, O_{23}=(4),~\, O_{24}=(5,6,7,8)\]
Since $O_{22},O_{23}\subset S$,
\begin{align*}
\begin{array}{c}
\mathcal{P}=\Big\{\{2,3\},\{2,3,4\}\Big\}
\end{array}
\end{align*}
The subset $\{2,3\}$ is closed w.r.t. $\ast$. So, the subquasigroup list is
\[\mathcal{L}=\Big\{ \{1,2,3,4\}, \{2,3\}\Big\}\]
The updated  sets $S=S\setminus\{2\}=\{3,4\}$ and $\Sigma=\Sigma\cup\{2\}=\{1,2\}$. Next, conside $3\in S$,
\[O_{31}=(1,4),~\, O_{32}=(2),~\, O_{33}=(3),~\, O_{34}=(5,7),~\, O_{35}=(6,8)\]
So, $\mathcal{P}=\big\{ \{3\}\big\}$ and $\{3\}$ is closed.
Hence the subquasigroups are 
\[\mathcal{L}=\Big\{ \{1,2,3,4\}, \{2,3\},\{3\}\Big\}\]
Now, $S=S\setminus\{3\}=\{4\}$ and $\Sigma=\Sigma\cup\{3\}=\{1,2,3\}$. Decomposition of $\sigma_4$ provides
\[O_{41}=(1,3),~\, O_{42}=(2,4),~\, O_{43}=(5,6,7,8)\]
Since $O_{42}\nsubseteq S$, $\mathcal{P}=\emptyset$. The sets $S=S\setminus\{4\}=\emptyset$ and $\Sigma=\Sigma\cup\{4\}=\{1,2,3,4\}$.

At this step, we recall that  we started with first permutation $\sigma_1$ and then based on obtained subquasigroups we consider other permutations for finding the subquasigroups consisting only elements form the set $S$. The set $\Sigma'$ contains indices of such considered permutations $\sigma_1$, $\Sigma'=\{1\}$.

Now, one has to consider the next permutation, in the process, is $\sigma_i$, where   
$$i = \min(Q\setminus \Sigma).$$

So for the example, we are illustrating here, the index for the next permutation
$$i=\min\Big\{ \{1,2,\ldots,8\}\setminus \{ 1,2,3,4\}  \Big\}= \min\Big\{ 5,6,7,8\Big\}=5.$$
Now we repeat the above described complete process starting with the permutation $\sigma_5$. First get the cycle decomposition of $\sigma_5$, which is
\[O_{51}=(1,5),~\, O_{52}=(2,6),~\, O_{53}=(3,7),~\,O_{54}=(4,8)\]
Since, $5 \in O_{51}$ and $O_{51}\nsubseteq Q\setminus \Sigma'$, the set of possible candidates $\mathcal{P}=\emptyset$.
The sets $\Sigma=\Sigma\cup\{5\}=\{1,2,3,4,5\}$ and $S=\bigcup_{W\in \mathcal{L}} W\setminus \Sigma=\emptyset $.   $\Sigma'=\{1,5\}$. 

By continuing, the index of next row permutation to consider,
$$i=\min(Q\setminus \Sigma)= 6$$
and the cycle decomposition of $\sigma_6$ is
\[O_{61}=(1,3,6,8),~\, O_{62}=(2,4,5,7)\]
Again $O_{61}\nsubseteq Q\setminus\Sigma'$, so $\mathcal{P}=\emptyset$. The sets  $\Sigma=\{1,2,3,4,5,6\}$, $S$ remains $\emptyset$, and $\Sigma'=\Sigma' \cup\{6\}=\{1,5,6\}$.

The next row index, $i=\min(Q\setminus \Sigma)= 7$. The cycle decomposition of $\sigma_7$ is
\[O_{71}=(1,7),~\, O_{72}=(2,8),~\,O_{73}=(3,5),~\, O_{74}=(4,6)\]
So, once again $O_{71}\nsubseteq Q\setminus \Sigma'$, $\mathcal{P}=\emptyset$. Then, $\Sigma=\{1,2,3,4,5,6,7\}$ and $\Sigma'=\Sigma' \cup\{7\}=\{1,5,6,7\}$

Now, $t=\min(Q\setminus \Sigma)= 8.$ The decomposition of $\sigma_8$
\[O_{81}=(1,3,6,8),~\, O_{82}=(2,4,5,7)\]
$O_{81}\nsubseteq Q\setminus\Sigma'$.  
All the row permutations have been considered, hence the procedure terminates and the list of all proper subquasigroups of $(Q,\ast)$,
\[\mathcal{L}=\Big\{ \{1,2,3,4\}, \{2,3\},\{3\}\Big\}\]
\end{example}
Below we display the pseudo code of the above described method to find out all proper subquasigroups of a given quasigroups.
\begin{algorithm}[H]
\caption{Finding all Proper Subquasigroups}
\label{subqgalgo}
\begin{algorithmic}[1]
\REQUIRE A Latin square $L$ of Quasigroup $(Q=\{1,2,\ldots,n\}, \ast)$) 
\ENSURE List of all  subquasigroups
\STATE list $\mathcal{L}=\emptyset$, $i=1$, $\Sigma =\emptyset$, $\Sigma' =\emptyset$ 
\WHILE{$i\leq n$}
\STATE $\Sigma=\Sigma \cup \{i\}$
\STATE decompose $\sigma_i$ of $L$ into disjoint cycles $O_{ij}$ 
\STATE fix $k$, such that $O_{ik}$ contains $i^{th}$ element of $Q$
\STATE $\mathcal{P'}=\text{PowerSet}\big\{O_{ij\mid j\neq k} \bigm| O_{ij},O_{ik}\subset Q\setminus \Sigma' ~\&~\lvert O_{ij}\rvert +\lvert O_{ik}\rvert \leq n/2   \big\}$
\STATE $\mathcal{P}=\Big\{W\in \mathcal{P'}\bigm| \lvert W\rvert +\lvert O_{ik}\rvert \leq n/2 \Big\}$
\WHILE{$\mathcal{P}\neq \emptyset$}
\STATE $ W\in \mathcal{P},\quad \mathcal{P}=\mathcal{P}\setminus \{W\}$
\STATE $W=O_{ik}\cup W$
\IF{ $W$ is closed under $\ast$} 
\STATE $\mathcal{L}=\mathcal{L} \cup \{W\}$
\ENDIF
\ENDWHILE
\STATE $S=\bigcup_{W\in \mathcal{L}} W\setminus \Sigma$
\WHILE{$S\neq \emptyset$}
\STATE decompose $\sigma_j,~\, j\in S,$
\STATE $\Sigma=\Sigma \cup \{j\}$
\STATE $\mathcal{P'}=\text{PowerSet}\big\{O_{jl\mid l\neq k}\bigm| O_{jl}, O_{jk}\subset S \text{ and }\lvert O_{jl}\rvert +\lvert O_{jk}\rvert \leq n/2  \big\}$
\STATE $\mathcal{P}=\Big\{W\in \mathcal{P'}\bigm| \lvert W\rvert +\lvert O_{jk}\rvert \leq n/2  \Big\}$
\WHILE{$\mathcal{P}\neq \emptyset$}
\STATE $ W\in \mathcal{P},\quad \mathcal{P}=\mathcal{P}\setminus \{W\}$
\STATE $W=O_{jk}\cup W$
\IF{ $W$ is closed under $\ast$} 
\STATE $\mathcal{L}=\mathcal{L} \cup \{W\}$
\ENDIF
\ENDWHILE
\STATE $S=S\setminus \{j\}$
\ENDWHILE
\STATE $\Sigma'=\Sigma' \cup \{i\}$
\STATE $i=\min\big\{ Q\setminus \Sigma \big\}$
\ENDWHILE 
\RETURN $\mathcal{L}$ 
\end{algorithmic}
\end{algorithm}
 Note that if the above algorithm returns empty list $\mathcal{L}$, then there is no any proper subquasigroup in $Q$. 
     \begin{prop}
Let  $D_1$
   be the set of all  diagonal entries  of  the Latin square of $Q$.   Define by induction $D_{k+1}$ as a set of all squares of elements from 
   $D_k$. Since $Q$ is finite there exists a positive integer $k$ such that $D=D_k=D_{k+1}$.  The quasigroup $Q$ has 
  no proper subquasigroups if and  only if $Q$ is generated by every element from $D$.   
\end{prop}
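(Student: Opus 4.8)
The plan is to reinterpret the family $\{D_k\}$ as the iterated images of the squaring map and then to reduce the whole equivalence to the single fact that \emph{every} subquasigroup of $Q$ meets the stable set $D$. Write $s(x)=x\ast x$ for the squaring map on $Q$, so that $D_1=s(Q)$ is exactly the set of diagonal entries of the Latin square and, inductively, $D_{k+1}=s(D_k)=s^{k+1}(Q)$. First I would record that the chain is decreasing: for the base step, any $z\in D_1$ has the form $z=y\ast y$ with $y\in Q$, so $s(z)=s(y\ast y)\in s(Q)=D_1$, whence $D_2=s(D_1)\subseteq D_1$; and if $D_k\subseteq D_{k-1}$ then applying $s$ gives $D_{k+1}\subseteq D_k$. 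Since $Q$ is finite the chain stabilizes at some index $N$, giving $D=D_N=s^N(Q)$ with $s(D)=D$; in particular squaring acts as a permutation of $D$.

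The forward implication is then immediate. If $Q$ has no proper subquasigroup, the only subquasigroup of $Q$ is $Q$ itself. For each $d\in D$ the multiplicative closure $\langle d\rangle$ of $\{d\}$ is closed under $\ast$, hence is a subquasigroup by Proposition~\ref{sq}; being non-proper it must equal $Q$, so $Q$ is generated by every element of $D$.

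The substance is the converse, and here the key step is the containment $W\cap D\neq\emptyset$ for every nonempty subquasigroup $W$. Indeed, a subquasigroup is closed under multiplication and hence under $s$, so $s^N(W)\subseteq W$; simultaneously $s^N(W)\subseteq s^N(Q)=D$, and $s^N(W)$ is nonempty because $W$ is. Thus $\emptyset\neq s^N(W)\subseteq W\cap D$. Assuming now that every element of $D$ generates $Q$, pick any $d\in W\cap D$: then $Q=\langle d\rangle\subseteq W$, forcing $W=Q$. Consequently $Q$ has no proper subquasigroup.

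I expect the only real content to be the identification of $D$ with the stable image $s^N(Q)$ together with the one-line observation $\emptyset\neq s^N(W)\subseteq W\cap D$; the decreasing-chain check and the equality $D_k=s^k(Q)$ are routine. The main thing to be careful about is justifying that the closures $\langle d\rangle$ are honest subquasigroups, which is exactly what Proposition~\ref{sq} supplies in the finite setting, and keeping straight that \emph{proper} means strictly smaller than $Q$, so that a non-proper subquasigroup is all of $Q$.
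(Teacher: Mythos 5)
Your proposal is correct and follows essentially the same route as the paper: the forward direction is the trivial observation that the subquasigroup generated by $d\in D$ cannot be proper, and the converse rests on the fact that any subquasigroup $W$, being closed under squaring, satisfies $\emptyset\neq s^N(W)\subseteq W\cap D$, which is exactly the paper's inductive construction of an element $a_k\in A\cap D_k$. You merely spell out in more detail the identification $D_k=s^k(Q)$ and the stabilization of the decreasing chain, which the paper takes for granted.
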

\begin{proof}
Suppose that $Q$ has no proper  subquasigroup and $a\in D$. Then $a$ generates $Q$. 

Converesely,
let $A$ be a subquasigroup in $Q$. 
If $a\in A$ then $a^2\in A\cap D_1$. By induction for any $k$ we can find an element $a_k
\in A\cap D_k$.    Hence by  assumption $A=Q$.
\end{proof}
\begin{example}
Consider a quasigroup $(Q=\{1,2,3,4,5\},\ast)$ as follows

\begin{align*} 
  \begin{array}{ c|ccccc}   
  \ast &  1 &  2 & 3 & 4 & 5 \\ 
  \hline
 1&  \boxed 2  &  3 & 1  & 4  & 5 \\  
2&  4 &   \boxed5&  3& 2& 1   \\  
 3 &   5 &1&  \boxed4& 2& 3 \\  
 4  &  1 & 2 &5 &  \boxed3  & 4 \\  
  5  & 3 &4& 2& 5& \boxed1   \\ 
  \end{array}
 \end{align*}
The elements of  diagonal set $D_1=\{ 2,5,4,3,1\}$  are  placed with in the box. The set of square of the diagonal elements  $D_2=D_1$, so $D=D_1$.  
Now, 
$$2\ast 2=5,~ (2\ast 2)\ast 2=5\ast 2=4,~ 2\ast (2\ast 2)= 2\ast 5=1,$$
$$(2\ast (2\ast 2))\ast 2= (2\ast 5)\ast 2=1\ast 2=3$$
Hence,  $2$ generates $Q$. 
Similary one can check that all other diagonal elements generates $Q$.  Note that here the order of parenthesis has to be taken into consideration.
\end{example}
\begin{example}
Consider the following quasigroup of order $8$.
\begin{align*}
 \begin{array}{ c|cccccccc} 
  \ast &  1 &  2 & 3 & 4 & 5 & 6 & 7&8\\ 
      \hline
  1&\boxed2& 5& 8& 3& 7& 6& 4& 1 \\ 
	2&3& \boxed1& 6& 2& 4& 8& 7& 5\\ 
	3&4& 6& \boxed1& 7& 3& 5& 2& 8\\ 
	4&8& 7& 2& \boxed6& 5& 3& 1& 4\\ 
	5&6& 4& 3& 8& \boxed1& 2& 5& 7\\ 
	6&5& 2& 7& 1& 8& \boxed4& 6& 3\\ 
	7&7& 8& 5& 4& 2& 1& \boxed3& 6\\ 
	8&1& 3& 4& 5& 6& 7& 8& \boxed2\\ 
 \end{array}
\end{align*}

 In this case, the set of diagonal elements $D_1=\{2,1,6,4,3\}$ is a proper subset of $Q$, $D_2=\{1,2,4,6\}$ and $D_3=D_2$, so $D=D_2$. We can check as above that any element of $D$ generates $Q$, hence $(Q,\ast)$ has no proper subquasigroup. The same has also been shown by Algorithm \ref{subqgalgo}.
\end{example}

Now, we present an algorithm for the construction of a subquasigroup 
generated by an element $a\in Q.$

\begin{algorithm}[H]
\caption{Generation of a subquasigroup by an element}
\label{sbqgElement}
\begin{algorithmic}[1]
\REQUIRE An element $a\in Q$
\ENSURE  Subquasigroup of $Q$ generated by $a$
\STATE   $A_{0}=\{a\}$
\STATE   $k=0$ 
\STATE   $A_{k+1}=A_{k}A_{k}\cup A_{k}=\{a\ast a, a\}$
\WHILE{$A_{k+1} \neq A_{k}$}
\STATE $k=k+1$
\STATE $A_{k+1}=A_{k}A_{k}\cup A_{k}$
\begin{eqnarray*}
\hspace{1.45 cm}=\left[\left(A_k\setminus A_{k-1}\right)\left(A_k\setminus A_{k-1}\right) \right]\cup 
\left[\left(A_k\setminus A_{k-1}\right)A_{k-1}\right] \cup \\
\hspace*{-6 cm}\left[A_{k-1}\left(A_k\setminus A_{k-1}\right)\right] \cup A_k
\end{eqnarray*}
\vspace{-0.2 cm}
\IF{$\vert A_{k+1}\vert > \vert Q\vert /2$}
\STATE $A_{k+1}=Q$
\STATE \textbf{break}
\ENDIF			
\ENDWHILE
\RETURN $(A_{k+1},\ast)$ 
\end{algorithmic}

\end{algorithm}
The above algorithm terminates since $Q$ is finite so there exists a positive integer $k$ such that $A_{k+1}=A_k$.  Note that all the possible order of parenthesis are considered in the above algorithm.

\begin{prop}
$ A_k$ is a subquasigroup in $Q$ generated by  an element $a\in Q$. Then, $a$ generates $Q$ if 
and only if $Q=A_k$. The quasigroup $Q$ has no proper subquasigroup if and only if $Q=A_k$ for every element $a\in Q$.
\end{prop}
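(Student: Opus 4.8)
The plan is to first show that the output $A_k$ of Algorithm~\ref{sbqgElement} is precisely the subquasigroup generated by $a$, after which the two stated equivalences follow almost immediately. I would begin with the observation that the sets produced by the recursion $A_{j+1}=A_jA_j\cup A_j$ form an ascending chain $A_0\subseteq A_1\subseteq A_2\subseteq\cdots$, since each step adjoins $A_j$ to the newly formed products. As $Q$ is finite, this chain must stabilize, giving an index $k$ with $A_{k+1}=A_k$; at that point $A_kA_k\subseteq A_k$, so $A_k$ is closed under multiplication and hence, by Proposition~\ref{sq}, a subquasigroup of $Q$. One must also address the alternative termination branch of the algorithm: if some $A_{j+1}$ first exceeds $|Q|/2$ in size, then because every set $A_j$ is contained in any subquasigroup containing $a$, the subquasigroup generated by $a$ has more than $|Q|/2$ elements and therefore cannot be proper by Proposition~\ref{sq11}; thus setting $A_{k+1}=Q$ is justified.

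Next I would verify that $A_k$ equals the subquasigroup $\langle a\rangle$ generated by $a$, that is, the smallest subquasigroup containing $a$, which in a finite quasigroup coincides with the closure of $\{a\}$ under multiplication. The inclusion $A_k\subseteq\langle a\rangle$ follows by induction: $A_0=\{a\}\subseteq\langle a\rangle$, and if $A_j\subseteq\langle a\rangle$ then every product of two elements of $A_j$ again lies in $\langle a\rangle$, so $A_{j+1}\subseteq\langle a\rangle$. For the reverse inclusion, since $A_k$ is itself a subquasigroup containing $a$ it must contain $\langle a\rangle$, and hence $A_k=\langle a\rangle$. Here I would stress that, because each $A_j$ already contains all products formed at earlier stages, iterating the rule $A_{j+1}=A_jA_j\cup A_j$ captures iterated products under every parenthesization and of every length; for instance, both $a\ast(a\ast a)$ and $(a\ast a)\ast a$ appear by stage two. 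This is exactly the point flagged in the remark following the algorithm and the main technical detail to check carefully.

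With $A_k=\langle a\rangle$ established, the two equivalences are short. By definition $a$ generates $Q$ means $\langle a\rangle=Q$, which is $A_k=Q$, proving the first claim. For the second, if $Q$ has no proper subquasigroup then for each $a\in Q$ the subquasigroup $A_k=\langle a\rangle$ cannot be proper, so $A_k=Q$. Conversely, assume $A_k=Q$ for every $a\in Q$ and let $W$ be any nonempty subquasigroup; choosing $a\in W$ gives $Q=\langle a\rangle\subseteq W$, so $W=Q$, and $Q$ has no proper subquasigroup. The only genuine obstacle in the whole argument is the parenthesization and closure bookkeeping of the second paragraph together with the correctness of the early-termination branch; once these are pinned down via Propositions~\ref{sq} and~\ref{sq11}, everything else is formal.
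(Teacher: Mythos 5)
Your proposal is correct and follows essentially the same route as the paper: the stabilized set $A_k$ is closed under multiplication, hence a subquasigroup by Proposition~\ref{sq}, and the two equivalences are then formal. The paper's own proof is a single line that leaves the identification $A_k=\langle a\rangle$, the early-termination branch via Proposition~\ref{sq11}, and the final equivalences implicit; you have simply supplied those details explicitly.
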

\begin{proof}
If $A_{k+1}=A_k$ then by definition $A_k$ is closed under multiplication. Then, the result follows from  Proposition \ref{sq}.
\end{proof}
\begin{prop}
 Let $A_k$ be as above and $|A_k|>\frac{|Q|}2$. Then $A_k=Q$.
\end{prop}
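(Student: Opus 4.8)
The plan is to reduce the statement to Wall's bound, Proposition~\ref{sq11}, by exploiting the monotonicity of the chain produced by Algorithm~\ref{sbqgElement}. First I would record that the iterates form a non-decreasing chain $A_0\subseteq A_1\subseteq A_2\subseteq\cdots$, since at every step $A_{k+1}=A_kA_k\cup A_k\supseteq A_k$. Because $Q$ is finite, this chain stabilises at some index; its stable value is closed under multiplication, and hence by Proposition~\ref{sq} it is a subquasigroup of $Q$, namely the subquasigroup generated by $a$. In particular each iterate is contained in that generated subquasigroup, so if some $A_k$ satisfies $|A_k|>\frac{|Q|}{2}$, then the subquasigroup it generates also has order strictly greater than $\frac{|Q|}{2}$.

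Next I would invoke Proposition~\ref{sq11}, which asserts that every proper subquasigroup $W$ of $Q$ obeys $|W|\leqslant\frac{|Q|}{2}$. Read contrapositively, a subquasigroup of order exceeding $\frac{|Q|}{2}$ cannot be proper, and must therefore coincide with $Q$. Applying this to the subquasigroup generated by $a$ forces it to equal $Q$, which yields $A_k=Q$. This is precisely the fact that legitimises the early-termination branch of Algorithm~\ref{sbqgElement}: once an iterate first exceeds half the size of $Q$, the generated subquasigroup is already all of $Q$, so the assignment $A_{k+1}=Q$ followed by \textbf{break} is correct and no further closure steps are required.

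I do not expect a genuine obstacle, as the statement is essentially the contrapositive of Proposition~\ref{sq11}. The only point demanding care is the distinction between an intermediate iterate $A_k$, which need not itself be closed under multiplication, and the subquasigroup it generates; the argument avoids this difficulty by relying solely on the inclusion of $A_k$ in that subquasigroup together with the cardinality inequality, so that no closure property of $A_k$ is invoked before the size comparison is made. Thus the entire proof consists of one line: a proper subquasigroup would violate the bound $|W|\leqslant\frac{|Q|}{2}$ of Proposition~\ref{sq11}, whence $A_k=Q$.
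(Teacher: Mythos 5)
The paper states this proposition without any proof, so there is nothing of the authors' to compare against; your reduction to Proposition~\ref{sq11} is clearly the intended argument, and in substance it is correct. The chain $A_0\subseteq A_1\subseteq\cdots$ is monotone, its stable value is closed under multiplication and hence, by Proposition~\ref{sq}, is the subquasigroup generated by $a$; since $A_k$ sits inside that subquasigroup, the hypothesis $|A_k|>\frac{|Q|}{2}$ forces the generated subquasigroup to exceed the bound of Proposition~\ref{sq11} and therefore to be all of $Q$. That is exactly what legitimises the early break in Algorithm~\ref{sbqgElement}.

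One step deserves more care than you give it: the final inference from ``the subquasigroup generated by $a$ equals $Q$'' to ``$A_k=Q$''. If $A_k$ denotes the stabilised value (as in the proposition immediately preceding this one, where $A_k$ is itself the subquasigroup generated by $a$), then the conclusion is immediate --- and in that reading the detour through the generated subquasigroup is unnecessary, since Proposition~\ref{sq11} applies to $A_k$ directly. But if $A_k$ is an intermediate, not-yet-closed iterate --- which is the situation in which Algorithm~\ref{sbqgElement} actually performs the test --- the implication fails: take $Q=\Z_7$ with $x*y=x+y+1$ and $a=0$; then $A_2=\{0,1,2,3\}$ has $4>\frac{7}{2}$ elements yet $A_2\ne Q$, even though $a$ generates $Q$. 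What is true, and what the algorithm needs (note that it \emph{overwrites} $A_{k+1}$ with $Q$ rather than asserting they were already equal), is that the subquasigroup generated by $a$ is $Q$. You should either state the conclusion in that form or make explicit that $A_k$ is the stable value; as written, your last sentence claims slightly more than your argument, or the facts, support.
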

\begin{remark}
It has to be noted that Algorithm \ref{subqgalgo} and Algorithm \ref{sbqgElement} both  can be used to  find whether  a given quasigroup does not have any proper subquasigroup. But using Algorithm \ref{sbqgElement}, one has to check the possibility of subquasigroups corresponding to each diagonal element of the quasigroup. So, in the case of no proper subquasigroup, which is of our interest, this algorithm will be  less efficient,   therefore confine its interest to the theoretical aspects.
\end{remark}

\section{ Finite Field Based Polynomially Complete Quasigroup of  Order $p^r$  Without  Proper Subquasigroups}\label{s3}
In this section, we propose a binary operation to construct quasigroups based on finite fields which gives polynomially complete quasigroups of order $p^r$, having no subquasigroups. We develop a methodology and an algorithm to construct such quasigroups. It provides cryptographically suitable quasigroups of order $2^r ~(r>1)$. Furthermore, we illustrate  some important properties of these quasigroups.

  Let  us consider a finite field $\mathbb{F}_{q}~(q=p^r)$ with $q>2$ elements, the set $Q=\{x\mid x\in \F_q\}$,  and $\alpha,\beta, c\in \F_q^*$.  
  Suppose that  $m,d$ are positive integers coprime  with $q-1$.
  Define a mutiplication in $Q$ as follows
  \begin{eqnarray} \label{qsq}
  x*y= \alpha x^m+\beta y^d+c
  \end{eqnarray}
  \begin{thm} 
  $(Q,\ast)$  is a quasigroup isotopic to the quasigroup $(Q,\cdot)$, where $\cdot$ is the multiplication defined in \eqref{qsq} 
  for $m=d=1$. The quasigroup $(Q,\cdot)$ is affine.
  \end{thm}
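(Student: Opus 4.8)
The plan is to deduce everything from a single observation: the coprimality hypotheses turn the exponentiation maps into permutations of the field. Since $\F_q^*$ is cyclic of order $q-1$ and $\gcd(m,q-1)=\gcd(d,q-1)=1$, the maps $x\mapsto x^m$ and $y\mapsto y^d$ are bijections of $\F_q$: each fixes $0$ and permutes $\F_q^*$, because writing elements as powers of a generator $g$ reduces the power map $g^k\mapsto g^{km}$ to $k\mapsto km$, which is invertible modulo $q-1$ exactly when $\gcd(m,q-1)=1$. I would record this first, since it is the only nonroutine ingredient of the whole argument.

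Next I would treat $(Q,\cdot)$ with $x\cdot y=\alpha x+\beta y+c$. Taking $(Q,+)$ to be the additive group of the field $\F_q$, the maps $x\mapsto\alpha x$ and $y\mapsto\beta y$ are automorphisms of $(Q,+)$: they are additive by distributivity and bijective because $\alpha,\beta\in\F_q^*$ are invertible. Together with $c\in Q$ this matches verbatim the definition of an affine quasigroup, so $(Q,\cdot)$ is affine. To confirm it is genuinely a quasigroup I would solve the equations $a\cdot x=b$ and $y\cdot a=b$ explicitly, obtaining the unique solutions $x=\beta^{-1}(b-\alpha a-c)$ and $y=\alpha^{-1}(b-\beta a-c)$.

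Then I would exhibit the isotopy. Set $\pi=\mathrm{id}$, $\pi_1^{-1}(x)=x^m$ and $\pi_2^{-1}(y)=y^d$; by the first step these are permutations of $Q$, so $\pi,\pi_1,\pi_2$ qualify for the isotopy relation \eqref{isot}. Substituting into $\cdot$ gives
\[
\pi\bigl(\pi_1^{-1}(x)\cdot\pi_2^{-1}(y)\bigr)=\alpha x^m+\beta y^d+c=x*y,
\]
which is precisely that relation. Hence $(Q,*)$ is an isotope of $(Q,\cdot)$, and since an isotope of a quasigroup is again a quasigroup, $(Q,*)$ is a quasigroup.

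I do not expect a genuine obstacle: once the permutation property of the power maps is established, the rest is verification. The only points needing care are confirming that the chosen $\pi_1,\pi_2$ really are permutations (which is exactly where the hypotheses $\gcd(m,q-1)=\gcd(d,q-1)=1$ enter) and that multiplication by $\alpha,\beta$ yields additive automorphisms rather than mere bijections, so that the affineness criterion applies as stated.
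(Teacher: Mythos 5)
Your proof is correct and follows essentially the same route as the paper: establish that $x\mapsto x^m$ and $y\mapsto y^d$ are permutations of $\F_q$ from the coprimality with $q-1$, then read off the isotopy with $\pi$ the identity. You additionally verify explicitly that $(Q,\cdot)$ is affine (via the additive group of $\F_q$ with $\alpha,\beta$ acting as additive automorphisms) and solve the division equations, details the paper's proof leaves implicit.
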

 \begin{proof}
 Since $m,d$ are coprime with $q-1$ the maps $\pi_1(x)= x^m$ and $\pi_2(y)=y^d$ are 
 permutations on ${Q}$. Hence,
 $$ x*y= \alpha \pi_1(x) + \beta \pi_2(y)+ c.$$
 That is, $$ x*y=\pi^{-1} (\pi_1(x)\cdot \pi_2(y))$$
 It follows that $(Q,*)$ is a quasigroup with necessary isotopy with $\pi$ as the identity map.
 \end{proof}
\begin{thm} \label{sqpc4}
 The quasigroup $(Q,*)$ from \eqref{qsq} is affine if and only if 
$md^{-1}\in \{1,p,\ldots,p^{r-1}\} \mod(q-1)$  where 
$p$ is the  characteristic of $\F_q,\; q=p^r$.  
\end{thm}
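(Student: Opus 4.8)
The plan is to build on the preceding theorem, which realises $(Q,*)$ as the principal isotope of the affine quasigroup $(Q,\cdot)$, $x\cdot y=\alpha x+\beta y+c$, through the monomial permutations $\pi_1(x)=x^m$ and $\pi_2(y)=y^d$ (with identity third component). Since affinity is measured against an additive abelian group on $Q$, the first fact I would isolate is the classical description of additive monomials: for an exponent $k$ coprime to $q-1$, the permutation $t\mapsto t^{k}$ lies in $\Aut(\F_q,+)$ (equivalently, it is $\F_p$-linear) if and only if $t^{k}$ agrees as a function on $\F_q$ with a power of the Frobenius, i.e.\ $k\equiv p^{i}\pmod{q-1}$ for some $0\le i\le r-1$; this holds because the only $\F_p$-linearised monomials over $\F_q$ are $t\mapsto t^{p^{i}}$. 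The content of the theorem is then that the condition $md^{-1}\in\{1,p,\dots,p^{r-1}\}\pmod{q-1}$ says precisely that the \emph{linking} monomial $t\mapsto t^{md^{-1}}$, which is the composite $\pi_2^{-1}\circ\pi_1$, is additive.

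For necessity I would first pin down the only group against which $*$ could be affine, by computing $G(Q)$. Writing $\sigma_i(y)=\beta y^{d}+(\alpha x_i^{m}+c)$ and $\tau_j(x)=\alpha x^{m}+(\beta x_j^{d}+c)$, the common monomial cancels in the differences, so each generator $\sigma_i\sigma_j^{-1}$ and $\tau_i\tau_j^{-1}$ is an ordinary $(\F_q,+)$-translation; hence $G(Q)$ is the elementary abelian, regular translation group of $(\F_q,+)$. For any affine presentation $x*y=A(x)\boxplus B(y)\boxplus e$, the group $G(Q)$ must coincide with the translation group of the defining abelian group $\boxplus$, which is therefore forced to be $(\F_q,+)$ up to a shift. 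Relative to it, $*$ must be an $\F_p$-affine function of $(x,y)$; in particular its $x$- and $y$-legs, and hence the composite $t\mapsto t^{md^{-1}}$, must be additive, giving $md^{-1}\equiv p^{i}$.

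For sufficiency I would assume $md^{-1}\equiv p^{i}$. Using $x^{m}=F_i(x)^{d}$ with $F_i(t)=t^{p^{i}}\in\Aut\F_q$, and writing $\alpha=a_0^{d}$, $\beta=b_0^{d}$ (every element of $\F_q^{*}$ is a $d$-th power, since $\gcd(d,q-1)=1$), I would rewrite $x*y=(a_0F_i(x))^{d}+(b_0 y)^{d}+c$ and transport $(\F_q,+)$ along $\theta(t)=t^{d^{-1}}$ (the inverse taken modulo $q-1$) to the elementary abelian group $u\boxplus v=(u^{d}+v^{d})^{d^{-1}}$. Under $\boxplus$ the maps $A\colon x\mapsto a_0F_i(x)$ and $B\colon y\mapsto b_0 y$ are automorphisms, since $\theta^{-1}A\theta(t)=\alpha t^{p^{i}}$ and $\theta^{-1}B\theta(t)=\beta t$ are $\F_p$-linear — the former precisely because $md^{-1}$ is a Frobenius exponent — and one computes $A(x)\boxplus B(y)=(\alpha x^{m}+\beta y^{d})^{d^{-1}}$. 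It then remains to reconcile this with $x*y=\alpha x^{m}+\beta y^{d}+c$, that is, to absorb the outer $d$-th power and the constant $c$ into a genuine $\boxplus$-affine expression $A(x)\boxplus B(y)\boxplus e$.

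This reconciliation is the step I expect to be the main obstacle, and it is the hinge on which the equivalence turns: one must verify that the field addition and the constant sitting at the top level of $\alpha x^{m}+\beta y^{d}+c$ are compatible with $\boxplus$, so that $*$ really takes the form $A(x)\boxplus B(y)\boxplus e$ with $A,B\in\Aut(Q,\boxplus)$. This compatibility is exactly the place where only the composite exponent $md^{-1}$, and not $m$ and $d$ individually, is permitted to matter. I would carry it out by writing both $*$ and the candidate affine form entirely in $\boxplus$-coordinates and comparing them as functions of $(x,y)$, fixing $e$ and the group's shift by evaluating at arguments where the monomials vanish. Should a direct reconciliation resist, the fallback is to invoke the characterisation of affine (central) quasigroups through the structure of $\Mult(Q)$ from \cite{Artamonov2013} and to check the resulting condition on $\Mult(Q)=\langle T^{+},\,x\mapsto\alpha x^{m},\,y\mapsto\beta y^{d}\rangle$ directly from the explicit permutations $\sigma_i,\tau_j$ computed above.
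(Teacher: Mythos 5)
Your necessity argument is sound and runs parallel to the paper's: the paper likewise computes $L_xL_z^{-1}(y)=y+\alpha(x^m-z^m)$ and $R_yR_t^{-1}(x)=x+\beta(y^d-t^d)$, identifies $G(Q,*)$ with the translation group of $(\F_q,+)$, and plays this off against the translation group of a putative affine presentation to conclude that some additive $\theta$ satisfies $\theta(x^d)=x^m$, whence $t\mapsto t^{md^{-1}}$ is additive and $md^{-1}$ is a Frobenius exponent. In fact your version, pushed one step further, proves strictly more: once $G(Q)$ forces the group $\boxplus$ to coincide with $(\F_q,+)$ up to a shift, setting $y=0$ and then $x=0$ in $x*y=\sigma(x)+\tau(y)+\mathrm{const}$ (with $\sigma,\tau\in\Aut(\F_q,+)$) shows that $x\mapsto\alpha x^m$ and $y\mapsto\beta y^d$ are \emph{each} additive, i.e.\ that $m$ and $d$ individually, not merely the ratio $md^{-1}$, lie in $\{1,p,\dots,p^{r-1}\}$ mod $q-1$.

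The reconciliation you flag as the main obstacle in the sufficiency direction is a genuine gap, and it cannot be closed: the outer field addition and the constant $c$ are not compatible with the transported group $u\boxplus v=(u^d+v^d)^{d^{-1}}$, and this is not an artifact of your choice of coordinates. Take $q=5$ and $m=d=3$, so that $md^{-1}\equiv 1$ and the theorem would declare $x*y=x^3+y^3+c$ affine; but by the strengthened necessity above any affine presentation forces the underlying group to be a shift of $(\F_5,+)$ and the leg $x\mapsto x^3$ to be additive on $\F_5$, which it is not. So the ratio condition is necessary but not sufficient, and your instinct that "only the composite exponent is permitted to matter" at that step is exactly where the argument breaks. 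The paper's own converse proof stumbles at the same place: it asserts that $y\mapsto\beta\pi(y)=\beta y^d$ is an automorphism of $(\F_q,+)$, which holds only when $d$ itself is a power of $p$ — in particular in the case $d=1$ that Corollary~\ref{corConst} actually uses, where sufficiency is immediate because both legs of $*$ are genuinely additive. Rather than forcing the reconciliation, you should strengthen the condition to "$m$ and $d$ are both in $\{1,p,\dots,p^{r-1}\}$ mod $q-1$"; with that hypothesis your two halves fit together with no residual gap, and your fallback via the structure of $\Mult(Q)$ becomes unnecessary.
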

\begin{proof}
 Let $L_x$ and $R_y$ be the maps of left and right multiplications in the quasigroup $(Q,*)$ from 
 \eqref{qsq}. Then 
 $$
 L_x(y) =\beta y^d + \alpha x^m+c,\quad R_y(x) = \alpha x^m +\beta y^d+c.
 $$
 Hence 
 \begin{eqnarray*}
 &
 L_x^{-1}(y) =\sqrt[d]{\beta^{-1}y - \beta^{-1}\left(\alpha x^m+c\right)},
 \\ &
 R_y^{-1}(x) = \sqrt[m]{\alpha^{-1}x - \alpha^{-1}\left(\beta y^d+c\right)},
\end{eqnarray*}
Hence
\begin{eqnarray} \label{qq} 
L_xL_z^{-1}(y) =y+ \alpha (x^m-z^m), \quad 
R_yR_t^{-1}(x) = x + \beta (y^d-t^d).
\end{eqnarray}
There existrs a group isomorphism $\pi:G(Q,*)\to (\F_q,+)$ such that
$$
\pi\left(L_x,L_z^{-1}\right)  = \alpha(x^m-z^m),\quad \pi\left(R_yR_t^{-1}\right) = \beta(y^d-t^d).
$$
Suppose that $(Q,*)$ is affine. Then there exists a structure of an additive abelian group 
$(Q,  \oplus)  $  such that 
$$
x*y= \xi x \oplus \zeta y \oplus d \quad \xi,\zeta\in \Aut(Q,\oplus).
$$
Hence as in \eqref{qq}  
$$
L_xL_z^{-1}(y) = y\oplus \xi x\ominus \xi z,\quad 
R_yR_z^{-1}(x) = x \oplus \zeta y \ominus \zeta z, \quad \xi,\zeta\in \Aut(Q,\oplus).
$$
Again we get a group isomorphism $\omega:G(Q,*)\to (Q,\oplus)$  where
$$
\omega\left(L_xL_z^{-1}\right) = \xi x\ominus \xi z,\quad  \omega\left(R_yR_z^{-1}\right) =\zeta y \ominus \zeta z.
$$
Consequently,
$$
\omega\pi^{-1}\alpha(x^m-z^m) = \xi (x\ominus z),\quad \omega\pi^{-1}\beta( y^d-z^d) =\zeta(y\ominus z),
$$
or
$$
\xi^{-1}\omega\pi^{-1}\alpha(x^m-z^m) =  x\ominus z,\quad \zeta^{-1}\omega\pi^{-1}\beta( y^d-z^d) = y\ominus z,
$$
and therefore
$$
 \xi^{-1}\omega\pi^{-1} \alpha(x^m-z^m) =
\zeta^{-1}\omega \pi^{-1}\beta(x^d-z^d) = x\ominus z.
$$
It $z=0$ then $\xi^{-1}\omega\pi^{-1}\alpha(x^m)=x\ominus 0$ that is $\xi^{-1}\omega\pi^{-1}\alpha(x)=x^{1/m}\ominus 0$
and therefore 
$$
\xi^{-1}\omega\pi^{-1}\alpha(x^m-z^m)= (x^m-y^m)^{1/m} \ominus 0 = x\ominus z.
$$
Hence $(x^m-z^m)^{1/m} = x\ominus z\oplus 0$.

Similarly 
$$
\zeta^{-1}\omega \pi^{-1}\beta(x^d-z^d) = y\ominus z = (x^d-z^d) ^{1/d} \ominus 0,
$$
 Hence
$$
\alpha^{-1}\pi\omega^{-1} \xi\zeta^{-1} \omega \pi^{-1}\beta(x^d-z^d)= x^m-z^m.
$$
Recall that 
$
\xi\zeta^{-1} \in \Aut(Q,\oplus)$. Thus 
$$  
  \pi\omega^{-1}\xi\zeta^{-1}\omega\pi^{-1}   \text{ and }
 \theta=\alpha^{-1}\pi\omega^{-1} \xi\zeta^{-1} \omega \pi^{-1}\beta   \in \Aut(\F_q, +).
$$
Thus in the group $(\F_q,+)$ we have  $\theta(x^d-z^d)= x^m-z^m$ for some $\theta \in\Aut(\F_q,+).$
In particular $\theta(x^d)=x^m$ or $\theta(x)= x^{md^{-1}}$.  Since the map $\theta$ is additive, we can conclude that $md^{-1} $ is a power of $p$.

Conversely, let $m=dp^l,~0\leq l\leq r-1$. Then \eqref{qsq} has the form 
$$
x*y=\alpha x^{dp^l}+
\beta y^d+c.
$$
The map $\pi(x)= x^d$ defines an isomorphism of $(Q,*)$ and the quasigroup with multiplication 
$x\ast y= \alpha  \pi(x)^{p^l}+
\beta \pi(y)+c.
$ The last one is affine since the maps $x\mapsto \alpha  \pi(x)^{p^l}$
 and $y\mapsto \beta \pi(y)$ are automorphisms of $(\F_q,+)$.
\end{proof}

 \begin{thm} \label{sqpc}
Suppose that $Q$ is from \eqref{qsq} and $\beta$ is a generator of the cyclic
group $\F_q^*$.
Then $Q$ is simple.
\end{thm}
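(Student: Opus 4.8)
The plan is to show that $(Q,*)$ has no nontrivial congruence. I would use the standard correspondence for quasigroups between congruences and $\Mult(Q)$-invariant partitions of $Q$ (systems of imprimitivity): an equivalence relation on $Q$ is a congruence precisely when every left and right translation, hence every element of $\Mult(Q)$, permutes its classes. The previous theorem's computation shows that $G(Q)\subseteq\Mult(Q)$ consists of \emph{all} translations $y\mapsto y+t$ of $(\F_q,+)$, since the maps $L_xL_z^{-1}$ already realise $\pi^{-1}$ of every $\alpha(x^m-z^m)$, and $x\mapsto x^m$ is a bijection. As the class partition of any congruence is invariant under this regular translation group, the class of $0$ must be an additive subgroup $H\le(\F_q,+)$ and the classes are exactly the cosets of $H$. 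So it suffices to prove that the only additive subgroups giving a congruence are $H=0$ and $H=\F_q$.

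Next I would record what invariance under the remaining generators of $\Mult(Q)$ means for $H$. Since $R_y(x)=\alpha x^m+\beta y^d+c$ and $L_x(y)=\beta y^d+\alpha x^m+c$, requiring that $R_y$ and $L_x$ send $H$-cosets to $H$-cosets is equivalent to the two closure conditions: for all $x\equiv y\pmod H$,
\[ \alpha\,(x^m-y^m)\in H \qquad\text{and}\qquad \beta\,(x^d-y^d)\in H. \]
Thus a nontrivial congruence is the same as a proper nonzero additive subgroup $H$ closed under these two operations, and I only need to exclude this; the generator hypothesis on $\beta$ should make the second condition decisive.

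The core step is to force $H$ to be an $\F_q$-subspace, for then $H\in\{0,\F_q\}$. When $d=1$ the second condition reads $\beta H\subseteq H$; since $\beta$ generates $\F_q^*$, iterating gives $\F_q^*H\subseteq H$, so $H$ is an $\F_q$-subspace and we are done. For $d\ge2$ I would exploit the \emph{full} strength of the second condition (all $x$, not merely $x\in H$) through iterated finite differences along $H$: the function $y\mapsto\beta\big((y+h_1)^d-y^d\big)$ is $H$-valued, and applying further differences $\Delta_{h_2},\dots,\Delta_{h_{d-1}}$ (each of which preserves being $H$-valued) produces an $H$-valued \emph{affine} function of $y$ whose linear coefficient is $d!\,\beta\,h_1\cdots h_{d-1}$. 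An $H$-valued affine function with $H$ proper must have vanishing linear part, forcing $h_1\cdots h_{d-1}=0$ and hence $H=0$. Since $d!\neq0$ exactly when $d\le p-1$, this settles all exponents $2\le d\le p-1$, and with the generator argument for $d=1$ it yields simplicity in the generic range.

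The main obstacle is precisely the degenerate regime where $x\mapsto x^d$ is \emph{additive}, i.e.\ when $d$ is a power of the characteristic $p$ (so $d!\equiv0$ and the finite-difference extraction collapses). There $(y+h)^d-y^d=h^d$, and the second condition shrinks to $\beta\,\phi^{e}(H)=H$, where $\phi$ is the Frobenius and $d=p^{e}$; all nonlinear information disappears and one is forced to use the generator property of $\beta$ directly. I would attack this via a multiplier/subfield argument: the set $\{\lambda\in\F_q:\lambda H=H\}$ is a subfield of $\F_q$, and the goal would be to show $\beta$ lies in it, whence $H$ is $\F_q$-linear. I expect this case to be the delicate heart of the proof—it is exactly where the rigidity of ``$\beta$ a generator'' interacting with a Frobenius-twisted $H$ must do all the work—and it may well require supplementing the hypothesis (for instance with the non-affineness condition used elsewhere in the paper), since for $d$ a power of $p$ the operation \eqref{qsq} can be affine and such affine isotopes need not be simple.
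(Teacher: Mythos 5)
Your plan takes a genuinely different route from the paper. The paper's proof is group-theoretic and very short: it notes that $G(Q)$ contains all translations $y\mapsto y+w$, that choosing $x$ with $\alpha x^m+c=0$ places the map $f(y)=\beta y^d$ in the stabiliser of $0$ inside $\Mult(Q)$, asserts that $f$ acts transitively on $\F_q^*$ because $\beta$ is a generator, concludes that $\Mult(Q)$ is doubly transitive, and invokes a cited result that a quasigroup with doubly transitive multiplication group is simple. Your reduction of congruences to proper nonzero additive subgroups $H$ closed under $h\mapsto\alpha\left((y+h)^m-y^m\right)$ and $h\mapsto\beta\left((y+h)^d-y^d\right)$ is correct, the $d=1$ case via $\beta H\subseteq H$ is correct, and the finite-difference argument does dispose of $2\le d\le p-1$ (indeed without using primitivity of $\beta$ at all). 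But this leaves a real gap: every $d\ge p$, not only powers of $p$, kills $d!$, so for instance $d=3$ over $\F_{2^r}$ with $r$ odd escapes both of your completed cases; and in characteristic $2$ -- the case the paper cares most about -- the range $2\le d\le p-1$ is empty, so your argument covers only $d=1$ there.

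That said, your closing suspicion is not merely prudent: the statement is false as written, so no proof can close the gap without restricting the parameters. Take $q=4$, $\omega$ a generator of $\F_4^*$ (so $\omega^2=\omega+1$), $m=d=2$, $\alpha=\beta=\omega$, and any $c\in\F_4^*$. Then $H=\{0,\omega^2\}$ satisfies $\omega h^2\in H$ for all $h\in H$, and since $x*y-x'*y'=\omega(x-x')^2+\omega(y-y')^2$, the partition $\bigl\{\{0,\omega^2\},\{1,\omega\}\bigr\}$ is a nontrivial congruence of the quasigroup \eqref{qsq}; one can also verify this directly on the Latin square. This is precisely where the paper's own argument fails: in this example $f(y)=\omega y^2$ has orbits $\{1,\omega\}$ and $\{\omega^2\}$ on $\F_4^*$, so the asserted transitivity of $f$ (and hence double transitivity of $\Mult(Q)$) does not hold; the transitivity claim is automatic only for $d=1$, which is the case actually used in Corollary \ref{corConst}. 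So your proof is complete and correct for $1\le d\le p-1$, and the regime you flag as ``the delicate heart'' genuinely requires an additional hypothesis (e.g.\ $d=1$, or the non-affineness condition of Theorem \ref{sqpc4}) rather than a cleverer argument.
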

\begin{proof}
By \eqref{qq} the group $G(Q)$ contains all translations $x\mapsto x+w$ for all
$w\in \F_q$.  As it is shown in  the proof of Theorem \ref{sqpc4} 
$L_x(y)= \beta y^d+u,\; u = \alpha x^m+c\in \F_q$. Hence if we take $x$ such that $\alpha x^m+c=0$ then we see that the map $f(y) =\beta y^d$ belongs to 
$\Mult(Q)$. Consider the stabilizer subgroup  $H$ of zero element in $\Mult(Q)$.
It contains the map $f$ which acts transitively in $\F_q^*$ because $\beta$ is the
generator of $\F_q^*$.
Hence $\Mult(Q)$ is a doubly transitive permutation group and therefore $Q$ is simple,   \cite{Phillips}~[Proposition 1].
\end{proof}
\begin{cor}  \label{csq1}
Suppose that $md^{-1}\notin \{1,p,\ldots,p^{r-1}\} \mod(q-1)$ and $\beta$ is a generator of the cyclic
group $\F_q^*$. Then 
$Q$  with multiplication \eqref{qsq} is polynomially complete.  
\end{cor}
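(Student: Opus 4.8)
The plan is to apply the structural characterization recorded in the preliminaries, namely that a finite quasigroup is polynomially complete if and only if it is simple and non-affine. Under this criterion it suffices to verify the two properties separately for the quasigroup $(Q,*)$ defined in \eqref{qsq}, given the two hypotheses of the corollary.

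First I would establish simplicity. This is immediate from Theorem \ref{sqpc}, whose sole hypothesis is that $\beta$ generates the cyclic group $\F_q^*$ --- which is precisely one of the two standing assumptions here. Hence $(Q,*)$ is simple, with no further work required.

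Next I would establish that $(Q,*)$ is non-affine. Theorem \ref{sqpc4} furnishes an exact criterion: $(Q,*)$ is affine if and only if $md^{-1}\in\{1,p,\ldots,p^{r-1}\}\pmod{q-1}$. The second hypothesis of the corollary is exactly the negation of this condition, so by the contrapositive direction of Theorem \ref{sqpc4} the quasigroup $(Q,*)$ is not affine.

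Combining these two observations with the characterization theorem yields that $(Q,*)$ is simultaneously simple and non-affine, and therefore polynomially complete. I do not expect any genuine obstacle in this argument: all of the analytic content --- the double transitivity of $\Mult(Q)$ behind simplicity, and the automorphism computation with $\theta(x)=x^{md^{-1}}$ behind the affineness criterion --- was already carried out in Theorems \ref{sqpc} and \ref{sqpc4}. The only point demanding care is bookkeeping, namely confirming that the two hypotheses of the corollary align respectively with the hypothesis of Theorem \ref{sqpc} and with the negation of the criterion in Theorem \ref{sqpc4}, so that both invocations are legitimate.
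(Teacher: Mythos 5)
Your proof is correct and matches the paper's intent exactly: the corollary is stated without proof precisely because it is the immediate combination of Theorem \ref{sqpc4} (the affineness criterion), Theorem \ref{sqpc} (simplicity from $\beta$ being a generator), and the characterization of polynomial completeness as simple plus non-affine. Nothing further is needed.
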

\begin{thm} \label{tn1}
 	Let $Q$ be the quasigroup defined in $\F_q, q>2,$  with multiplication 
 	$$
 	x*y=\alpha x^m + \beta y^d+c,\quad c\in \F_q, \quad \alpha,\beta\in \F_q^*,
 	$$
 	where $\beta$ is a generator of the cyclic group $\F_q^*$. Here $0<m,d<q-1$ are coprime with $q-1$.	
Then, any two distinct elements in $\F_q$ generate the quasigroup $Q$.
 \end{thm}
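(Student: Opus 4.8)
The plan is to study the subquasigroup $W=\langle a,b\rangle$ generated by an arbitrary pair of distinct elements $a,b$ via its invariance under additive translations. By Proposition~\ref{sq} it suffices to take $W$ to be the smallest subset containing $a,b$ and closed under $\ast$. Recall from \eqref{qq} that $L_uL_z^{-1}$ and $R_yR_t^{-1}$ act on $\F_q$ as the translations $x\mapsto x+\alpha(u^m-z^m)$ and $x\mapsto x+\beta(y^d-t^d)$. Whenever $u,z\in W$ the maps $L_u,L_z$ restrict to bijections of $W$, so translation by $\alpha(u^m-z^m)$ preserves $W$; likewise translation by $\beta(y^d-t^d)$ preserves $W$ for $y,t\in W$. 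Hence $W$ is invariant under the additive subgroup $V\le(\F_q,+)$ generated by all $\alpha(u^m-z^m)$ and $\beta(u^d-z^d)$ with $u,z\in W$. Since $\gcd(d,q-1)=1$ the map $x\mapsto x^d$ is a bijection, so $a^d\neq b^d$ and $\beta(a^d-b^d)\in V$ is nonzero; thus $V\neq 0$. Consequently $W$ is a nonempty union of $V$-cosets with $a+V\subseteq W$, and it is enough to prove $V=\F_q$, for then $W\supseteq a+V=\F_q$.

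Next I would extract the defining relations of $V$ at the point $a$. Because $a,a+t\in W$ for every $t\in V$, the previous step gives $\alpha((a+t)^m-a^m)\in V$ and $\beta((a+t)^d-a^d)\in V$ for all $t\in V$, that is
\[
\{(a+t)^m:t\in V\}=a^m+\alpha^{-1}V,\qquad \{(a+t)^d:t\in V\}=a^d+\beta^{-1}V,
\]
the equalities holding by cardinality since $x\mapsto x^m$ and $x\mapsto x^d$ are bijections. Writing $e\equiv md^{-1}\pmod{q-1}$ (legitimate as $\gcd(d,q-1)=1$) we have $x^m=(x^d)^e$, so applying $x\mapsto x^e$ to the second identity and comparing with the first yields
\[
\{x^{e}:x\in A+U\}=A^e+\alpha^{-1}V,\qquad A:=a^d,\quad U:=\beta^{-1}V .
\]
In words, the single power map $x\mapsto x^e$ carries the $U$-coset $A+U$ bijectively onto a coset of $\alpha^{-1}V$. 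By Theorem~\ref{sqpc4} the quasigroup is non-affine precisely when $e$ is not a power of $p$ modulo $q-1$, which is exactly the case in which $x\mapsto x^e$ is not additive.

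The crux is to deduce $V=\F_q$ from this coset identity, and here both the non-affineness and the generator hypothesis are indispensable. My plan is to show that a \emph{proper} $V$ would force $\beta$ (more precisely $\alpha^{-1}\beta$) to lie in a proper subfield of $\F_q$: a non-additive power map can transport a coset of an $\F_p$-subspace onto a coset of an $\F_p$-subspace only when that subspace is assembled from a proper subfield stabilised by the Frobenius attached to $e$, and the comparison of the multipliers $\alpha^{-1}V$ and $\beta^{-1}V$ then pins $\beta$ inside that subfield. Since $\beta$ generates $\F_q^{*}$ it lies in no proper subfield, so the only possibility is $U=V=\F_q$; equivalently, one shows $V$ is invariant under multiplication by $\beta$, whence $V\in\{0,\F_q\}$ and $V\neq0$ gives $V=\F_q$. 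I expect this to be the main obstacle: rigorously it is a statement about the value set of the polynomial $(A+X)^e-A^e$ on an $\F_p$-subspace, and proving that this value set is a coset of a subspace only when $U=\F_q$ (for $e$ not a $p$-power) will likely require either a Weil/Hasse--Weil character-sum estimate or a direct analysis of this polynomial modulo subspaces.

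Finally, once $V=\F_q$ we conclude $W=Q$, and since $a\neq b$ were arbitrary, any two distinct elements generate $Q$. I would stress that the non-affineness genuinely must be assumed, as in Corollary~\ref{csq1}: in the affine case the conclusion can fail. For instance, over $\F_4=\{0,1,\omega,\omega^2\}$ take $m=d=2$ (so $e=1$), $\alpha=\beta=\omega$ and $c=0$, giving $x\ast y=\omega x^2+\omega y^2$ with $\beta=\omega$ a generator of $\F_4^{*}$; then $\{0,\omega^2\}$ is closed under $\ast$ and hence a proper subquasigroup generated by two distinct elements. Thus the statement should be read together with the condition $md^{-1}\notin\{1,p,\ldots,p^{r-1}\}\bmod(q-1)$, under which the argument above applies and $Q$ is polynomially complete.
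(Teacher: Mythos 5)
Your reduction to the translation subgroup $V$ is sound: for $u,z\in W$ the maps $L_uL_z^{-1}$ and $R_uR_z^{-1}$ are the translations of \eqref{qq} and stabilise $W$, so $W$ is a union of cosets of the nonzero subgroup $V$, and the value-set identities $\{(a+t)^m: t\in V\}=a^m+\alpha^{-1}V$ and $\{(a+t)^d: t\in V\}=a^d+\beta^{-1}V$ follow by the cardinality argument you give. Your $\F_4$ counterexample is also correct: with $x\ast y=\omega x^2+\omega y^2$ the set $\{0,\omega^2\}$ is closed under $\ast$ (indeed $\omega^2\ast\omega^2=\omega^5+\omega^5=0$ and $0\ast\omega^2=\omega^5=\omega^2$), hence by Proposition \ref{sq} it is a proper subquasigroup generated by two distinct elements, even though $\beta=\omega$ generates $\F_4^*$ and $\gcd(2,3)=1$. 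So the theorem as literally stated is false and must indeed be read with the non-affineness condition $md^{-1}\notin\{1,p,\dots,p^{r-1}\}\bmod (q-1)$, exactly as you insist. This also pinpoints a flaw in the paper's own argument: in Lemma \ref{l3} the passage from $(L^*_x)^k(y)=y$ to $(L^\odot_{x^d})^{k+1}(y^d)=L^\odot_{x^d}(y^d)$ does not follow, because $L^*_x=L^\odot_{x^d}\circ\pi_d$ with $\pi_d(y)=y^d$ and the substitution $\pi_d$ does not commute with $L^\odot_{x^d}$; in the $\F_4$ example equation \eqref{ng} in fact fails for $x=y=\omega^2\in\{0,\omega^2\}$.

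Nevertheless, your proposal is not a proof. The entire content of the theorem is concentrated in the step you defer: that for $e=md^{-1}$ not a power of $p$, the map $x\mapsto x^e$ cannot carry a coset $A+U$ of a proper nonzero $\F_p$-subspace $U$ bijectively onto a coset of a subspace of the same size, and that comparing the multipliers $\alpha^{-1}V$ and $\beta^{-1}V$ then forces $\beta$ into a proper subfield. You state this only as a plan and candidly flag it as the main obstacle, to be settled "by a Weil estimate or a direct analysis"; no argument is supplied, and it is not routine — for small $q$ character-sum bounds are not decisive, and the structure of the value set of $(A+X)^e-A^e$ on an $\F_p$-subspace is delicate. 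Until that lemma is actually proved (or replaced by a corrected version of the paper's explicit-orbit computation that accounts for the twist by $\pi_d$), your argument establishes only that a proper subquasigroup containing two distinct elements would have to be a union of cosets of some nonzero proper $V$, not that no such $V$ exists. Your approach is genuinely different from the paper's (which iterates $L_x$ and uses the geometric-series formula of Lemma \ref{l1} to extract the algebraic relation \eqref{ng}), but in its current form it proves strictly less.
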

 \begin{proof}
 Consider in $\F_q$ new multiplication
 \begin{eqnarray} \label{mn}
 	x\odot y = \alpha x^{md^{-1}} + \beta y +c.
 \end{eqnarray}
 Denote by $L_x^{\odot}, L^*_x$ the operators of left multiplication by $x$  with respect to multplications $\odot, *$.   Since $x*y= x^d\odot y^d$ we have
 $L^*_x(y)= L^{\odot}_{x^d}(y^d)$. 
 
 Let's fix elements $x,y\in \F_q$.
 \begin{lem} \label{l1}
 	If $k\geqslant 1$ then $\left(L_x^{\odot}\right)^k (y) = \alpha\frac{1-\beta^k}{1-\beta} x^{md^{-1}} + \frac{1-\beta^k}{1-\beta} c + \beta^ky$.
 \end{lem}
 \begin{proof}
 The case $k=1$ follows from \eqref{mn}. Suppose that the formula is true for some $k$. Then by induction
 \begin{align*}
 	& \left(L_x^{\odot}\right)^{k+1} (y)  = L_x^{\odot}\left[ \alpha\frac{1-\beta^k}{1-\beta} x^{md^{-1}} + \frac{1-\beta^k}{1-\beta} c + \beta^ky\right]  \\
 	& = \alpha x^{md^{-1}} + \beta \left[ \alpha\frac{1-\beta^k}{1-\beta} x^{md^{-1}} + \frac{1-\beta^k}{1-\beta} c + \beta^ky\right]  + c  \\
 	& = \alpha\left[ 1+ \beta \frac{1-\beta^k}{1-\beta} \right]x^{md^{-1}}  + \beta^{k+1} y + \left[ 1+ \beta \frac{1-\beta^k}{1-\beta} \right] c.
 \end{align*}
 It suffices to notice that  $1+ \beta \frac{1-\beta^k}{1-\beta}= \frac{1-\beta^{k+1}}{1-\beta}$.
 \end{proof}
 \begin{lem} \label{l3}
 	Let $W$ be a subquasigroup of $(Q,*)$ containing elements ${ x,y}$. If $Q\ne W$, then 
 	\begin{eqnarray} \label{ng}
 		\alpha x^{m} + (\beta-1)y^d +c =0.
 	\end{eqnarray}
 \end{lem}
 \begin{proof}
 As we have already noticed $L^*_x(y)= L^{\odot}_{x^d}(y^d)$.  Suppose that  $ 	\left(L_x^{*}\right)^k (y)  =y$ for some { minimal} $1\leqslant k\leqslant q-2$.
 Then $ 	\left(L_{x^d}^{\odot}\right)^{k+1}( y^d)  = L_{x^d}^{\odot}  (y^d)$. By Lemma \ref{l1},
 $$
 \alpha\left[ \frac {1-\beta^{k+1}}{1-\beta}\right] x^m + \beta^{k+1} y^d + \frac{1-\beta^{k+1}}{1-\beta} c =    \alpha x^m+\beta y^d+c.
 $$
 Hence 
 $$
 \alpha\left[ \frac {1-\beta^{k+1}}{1-\beta}-1\right] x^m + \left[\beta^{k+1}-\beta\right]y^d + \left[\frac{1-\beta^{k+1}}{1-\beta}-1\right] c = 0
 $$
 Note that $ \frac {1-\beta^{k+1}}{1-\beta}-1= \frac{\beta(1-\beta^k)}{1-\beta}.$
 Multipliying by $\frac{1-\beta}{\beta(1-\beta^k)}$, we obtain  \eqref{ng}. Note that the order of $\beta$ is equal to $q-1.$ So if  $k\leqslant q-2$ then $1\ne \beta^k$.
 
 Suppose  that $k=q-1$, Then  $W$ contains  distinct elements
  $${ y,(L^*_{x})(y), (L^*_{x})^2(y),\ldots, (L^*_{x})^{q-2}(y)}$$ 
{ Thus, $W=Q$ since $q-1>\frac q2$ for $q>2.$} 
 \end{proof}
 
 Suppose that $W$ is a proper subquasigroup of $(Q,*)$ containing elements $x,y$.
Then \eqref{ng} holds for $x,y\in W$.
Let $W'$ be a subquasigroup generated by $x$. Then $W'\subseteq W$ and therefore $W'\ne Q$.  In particular, on  taking  $y=x$ in   \eqref{ng} we obtain
\begin{eqnarray}\label{xng}
\alpha x^m+ (\beta-1)x^d+c=0.
\end{eqnarray}
That is for an element $x\in W$ the equations \ref{xng} holds.
By combining \ref{xng} for $x\in W$ with \eqref{ng} for the same $x$ and another element $y\in W$, we obtain 
$(\beta-1)x^d=( \beta-1)y^d$.  
Since $1-\beta\ne 0$, we can conclude that $y=x$ because $d$ is coprime with $q-1$.\\ 
Thus, if $W$ is a proper subquasigroup of $(Q,*)$, it is an idempotent. That is, any two distinct elements generate the $(Q,*)$. 
\end{proof}
\begin{cor} \label{corConst}
Let $p$ be a prime and $q=p^r$.
Suppose that $m\notin \{1,p,\ldots,p^{r-1}\} \mod(q-1)$  and $\beta$ is a generator of the cyclic
group $\F_q^*$.   Suppose that $1<m<q-1$ is coprime with $q-1$ and $d=1$.
  Then there exists an element 
$c\in \F_q^*$ such that  $ Q=\{x \mid x\in \F_q\}$ with multiplication
\begin{eqnarray} \label{eq11}
 x*y = (1-\beta) x^m +\beta y +c
 \end{eqnarray}
   has no subsquasigroups and it is polynomially complete. An element $x\in Q$ form a subquasigroup if and only if 
\begin{eqnarray} \label{eqr1}
 x^m -x + \frac c{1-\beta} =0.
 \end{eqnarray}
\end{cor}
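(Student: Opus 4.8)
The plan is to view \eqref{eq11} as the special case $\alpha=1-\beta$, $d=1$ of the general operation \eqref{qsq}, and then to read off the two assertions from results already proved, leaving only a single short existence argument to carry out. Since $\beta$ generates $\F_q^*$ we have $\beta\ne 1$, so $\alpha=1-\beta\in\F_q^*$ and the operation is admissible. Polynomial completeness then requires essentially no work: with $d=1$ the hypothesis $m\notin\{1,p,\ldots,p^{r-1}\}\mod(q-1)$ coincides with the condition $md^{-1}\notin\{1,p,\ldots,p^{r-1}\}\mod(q-1)$ of Corollary \ref{csq1} (non-affineness coming from Theorem \ref{sqpc4} and simplicity from Theorem \ref{sqpc}), so $(Q,\ast)$ is polynomially complete for \emph{every} choice of $c\in\F_q^*$.

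Next I would determine exactly what a proper subquasigroup can be. By Theorem \ref{tn1} any two distinct elements generate $(Q,\ast)$, so a proper subquasigroup cannot contain two distinct elements and must be a singleton $\{x\}$; such a singleton is closed under $\ast$ precisely when $x\ast x=x$. Expanding $x\ast x=(1-\beta)x^m+\beta x+c=x$ and dividing by $1-\beta\ne 0$ yields $x^m-x+\frac{c}{1-\beta}=0$, which is \eqref{eqr1}. This establishes the stated equivalence and shows that $(Q,\ast)$ has no proper subquasigroup if and only if \eqref{eqr1} has no root in $\F_q$.

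It then remains to exhibit one $c\in\F_q^*$ for which \eqref{eqr1} is unsolvable. Writing $g(x)=x^m-x$, note that as $c$ runs over $\F_q^*$ the quantity $-\frac{c}{1-\beta}$ runs over all of $\F_q^*$, so it suffices to find a nonzero value omitted by $g$ and then take the corresponding $c=-(1-\beta)v$. The decisive observation is that $g(0)=0=g(1)$ while $0\ne 1$ in $\F_q$ (as $q>2$), so $g$ is not injective and hence not surjective on the finite set $\F_q$; it therefore omits some value $v$, and since $0=g(0)\in\operatorname{Im}g$ the omitted $v$ is necessarily nonzero. The resulting $c$ lies in $\F_q^*$ and leaves \eqref{eqr1} rootless, so $(Q,\ast)$ has no idempotent and hence no proper subquasigroup; combined with the polynomial completeness above, this finishes the proof.

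The reduction makes the argument essentially routine once it is in place; the only step deserving care is guaranteeing that the omitted value $v$ is nonzero, so that $c$ genuinely lands in $\F_q^*$ rather than being forced to $0$. This is exactly what the coincidence $g(0)=g(1)=0$ secures, since it places $0$ inside the image of $g$ and pushes the unavoidable non-surjectivity onto the nonzero elements of $\F_q$.
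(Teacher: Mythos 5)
Your proof is correct and follows essentially the same route as the paper: polynomial completeness is read off from Corollary \ref{csq1}, the reduction of proper subquasigroups to singletons satisfying \eqref{eqr1} comes from Theorem \ref{tn1}, and the existence of a suitable $c$ is obtained from the non-injectivity (hence non-surjectivity) of $x\mapsto x^m-x$, forced by $f(0)=f(1)=0$. You are in fact slightly more careful than the paper in noting explicitly that the omitted value must be nonzero because $0$ already lies in the image of $f$, which is what guarantees $c$ can be chosen in $\F_q^*$.
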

\begin{proof}
 The corollary follows immediately from the previous  theorem \ref{tn1} except the existence of an element $c\in \F_q*$ such that $(Q,*)$ does not have any proper subquasigroup.
\\ Let's consider a map
$$f:\FF_q\rightarrow \FF_q,\quad x\mapsto f(x)=x^m-x $$
Since $f(0)=f(1)=0$, the map $f$ is not bijective while the map
$$g:\FF_q\rightarrow \FF_q,\quad c\mapsto g(c)=c/(\beta -1)$$
is one-one. So, there exists a constant $c\in \FF_q^{\star}$ such that $c/(\beta -1) \in \FF_q^{\star}\setminus\text{Range}(f)$, hence Equation \eqref{eqr1} has no solution  in $\FF_q$, consequently, $(Q,\ast)$ has no proper subquasigroup.

\end{proof}

 Furthermore, the number of roots of the polynomial in \eqref{eqr1} can be found by using K\"onig-Rados 
  theorem, given in \cite{Lidl1996}~ [Chapter 6.1],  which states that the equation \eqref{eqr1} will have  $q-1-r$ number of solutions in $\mathbb{F}_{q}$, where $r$ is the rank of left circulant matrix corresponding to polynomial in \eqref{eqr1} of order $(q-1)\times (q-1)$
\begin{center}
  $$
  \begin{pmatrix}
  \gamma  & -1 & 0 & \hdots & 0 & 1 & 0 & \hdots & 0 & 0  \\
  -1 & 0 &   \hdots & 0 & 1 & 0 & \hdots   & 0 & 0 & \gamma\\
      \cdot & \hdots &   \cdot & \cdot & \cdot & \hdots & \cdot   & \cdot & \cdot & \cdot \\
    \hdots &   \cdot & \cdot & \cdot & \hdots & \cdot   & \cdot & \cdot & \cdot &\cdot \\
    0 &  \gamma & -1 &0 & \hdots & 0 & 1 & 0 & \hdots & 0  \\
  \end{pmatrix}, \quad \text{where}~\gamma = \frac{c}{1-\beta}.
  $$
\end{center} 

\begin{remark}

There are only $q-1-r$ idempotent subquasigroups in $(Q, \ast)$ for any $c\in \FF_q^{\star}$. Precisely, they are $(\{x\},\ast)$, where $x$ is a solution of Equation \eqref{eqr1}.   
\end{remark}

Based on Corollary \ref{corConst}, we propose the following  algorithm for the construction of polynomially complete quasigroups of order $p^r$ having no proper subquasigroup.
\begin{algorithm}[H]
\caption{Construction of Cryptographically Suitable Quasigroups of order $q=p^r$ }
\label{qgalgo}
\begin{algorithmic}[1]
\REQUIRE A prime $p$  and a positive integer $r$
\ENSURE A polynomially complete quasigroup of order $p^r$ having no proper subquasigroups
\STATE select an integer $m \notin  \{1,p,p^2,\ldots,p^{r-1}\}, ~1< m < (q-1)$, and\\ 
\hspace{2.7 cm}  $\text{gcd}(m,q-1)=1 $ 
\STATE fix a generator $\beta$ of  $\FF _{q}^{\star}$
\WHILE{True}
\STATE choose an element $c\in \FF _{q}^{\star}$
\STATE compute rank $r$ of the left circulant matrix of $x^m-x+\frac{c}{1-\beta}=0$ 
\IF{$q-1= r$}
\STATE \textbf{break}
\ENDIF  
\ENDWHILE
\STATE $Q=\{x \mid x\in \FF _{q} \}$
\STATE define binary operation $\ast$ on $Q$:
 \[\quad x \ast y = (1-\beta)x^m+\beta y + c, ~ \forall x,y \in Q \]
  \vspace{-4.7mm}
\RETURN $(Q,\ast)$ 
\end{algorithmic}

\end{algorithm}
\begin{remark}
One can avoid computing the rank of left circulant matrix at Step 5, in the above algorithm by determining the \emph{ range} of the map $f:\FF_q\rightarrow \FF_q,\quad x\mapsto x^m-x$. Since the map $f$ is not bijective, the nonempty set  $\FF_q\setminus \text{Range }(f)$  provides all those $c$ for which there will not be any subquasigroup. 
\end{remark}

Now, we give some concrete examples of construction of quasigroups of order $8$  over the field $\mathbb{F}_{2^3}$ to illustrate the above described method. For these constructions, possible choices of the integer $m$, mentioned in the algorithm, are $3,5,6$. In the following examples, we denote a \emph{primitive element} of $\mathbb{F}_{q}$  by $a$ and for convenience, tag the  elements $0,1,a,a+1,a^2,a^2+1,a^2+a,a^2+a+1$ of $\FF_q$ by $0,1,2,3,4,5,6,7$, respectively.  

\begin{example} \label{exConst1}Let $c =  1 \in \mathbb{F}_{2^3}$,  $m=3$, and a generator $\beta=a$ of  $\mathbb{F}_{2^3}^{\star} $. For these choices of $c,m $ and $\beta$, the set $Q=\{x \mid x\in \mathbb{F}_{2^3}\}$, with respect to binary operation $\ast$ as given in \eqref{eq11}, yields the following  polynomially complete quasigroup  
\begin{align*}
 \begin{array}{ c|cccccccc} 
  \ast&0 &  1 &  2 & 3 & 4 & 5 & 6 & 7\\ 
      \hline
      0 & 1 & 3 & 5 & 7 & 2 & 0 & 6 & 4  \\  
      1 & 2 & 0 & 6 & 4 & 1 & 3 & 5 & 7 \\  
      2 & 4 & 6 & 0 & 2 & 7 & 5 & 3 & 1 \\  
      3 & 6 & 4 & 2 & 0 & 5 & 7 & 1 & 3 \\  
      4 & 5 & 7 & 1 & 3 & 6 & 4 & 2 & 0\\  
      5 & 0 & 2 & 4 & 6 & 3 & 1 & 7 & 5\\  
      6 & 3 & 1 & 7 & 5 & 0 & 2 & 4 & 6\\ 
      7 & 7 & 5 & 3 & 1 & 4 & 6 & 0 & 2 \\ 
 \end{array}
\end{align*}

Now, the left circulant matrix corresponding to \eqref{eqr1} is  

$$
  \begin{pmatrix}
  a^2+a  & 1 & 0 & 1 & 0 & 0 & 0  \\
  1  & 0 & 1 & 0 & 0 & 0 & a^2+a  \\
  0  & 1 & 0 & 0 & 0 & a^2+a & 1  \\
  1  & 0 & 0 & 0 & a^2+a & 1 & 0  \\
  0  & 0 & 0 & a^2+a & 1 & 0 & 1  \\
  0  & 0 & a^2+a & 1 & 0 & 1 & 0  \\
  0  & a^2+a & 1 & 0 & 1 & 0 & 0  \\
  \end{pmatrix}  
  $$
whose rank is $7$. So, Equation (\ref{eqr1}) has no solution in  $\mathbb{F}_{2^3}$, hence the constructed quasigroup, $(Q, \ast)$, has no proper subquasigroup.   
\end{example}
\begin{example}Now, let $c =  a^2 \in \mathbb{F}_{2^3}$,  $m=5$, and a generator $\beta=a$ of  $\mathbb{F}_{2^3}^{\star} $. For these choices of $c,m $ and $\beta$, the set $Q=\{x \mid x\in \mathbb{F}_{2^3}\}$, with respect to binary operation $\ast$ as given in \eqref{eq11}, yields the following  polynomially complete quasigroup  
\begin{align*}
 \begin{array}{ c|cccccccc} 
  \ast&0 &  1 &  2 & 3 & 4 & 5 & 6 & 7\\ 
      \hline
      0 & 4&6&0&2&7&5&3&1 \\
      1 & 7&5&3&1&4&6&0&2  \\    
      2 & 6&4&2&0&5&7&1&3 \\
      3 & 2&0&6&4&1&3&5&7\\
      4 & 1&3&5&7&2&0&6&4 \\    
      5 & 3&1&7&5&0&2&4&6\\  
      6 & 0&2&4&6&3&1&7&5\\
      7 & 5&7&1&3&6&4&2&0 \\  
 \end{array}
\end{align*}
Note that  the above quasigroup is only permutation of some rows in the quasigroup constructed in  Example \ref{exConst1}.

The left circulant matrix corresponding to \eqref{eqr1} is  

$$
  \begin{pmatrix}
  a^2+1  & 1 & 0 & 1 & 0 & 0 & 0  \\
  1  & 0 & 1 & 0 & 0 & 0 & a^2+1  \\
  0  & 1 & 0 & 0 & 0 & a^2+1 & 1  \\
  1  & 0 & 0 & 0 & a^2+1 & 1 & 0  \\
  0  & 0 & 0 & a^2+1 & 1 & 0 & 1  \\
  0  & 0 & a^2+1 & 1 & 0 & 1 & 0  \\
  0  & a^2+1 & 1 & 0 & 1 & 0 & 0  \\
  \end{pmatrix}  
  $$
whose rank is $6$. So, Equation (\ref{eqr1}) has one solution in  $\mathbb{F}_{2^3}$. The element $2$, that is, $a$ satisfy the Equation (\ref{eqr1}). Hence, the constructed quasigroup, $(Q, \ast)$, has one idempotent subquasigroup $(\{2\}, \ast)$. 
\end{example}
\begin{remark}
All the quasigroups constructed by varying the constant $c\in \FF_q^{\star}$ and integer $m$ while keeping  the generator $\beta$ of $\FF_q^{\star}$ same in Equation (\ref{eq11}), belong to the same isotopic class. 
\end{remark}

  In connection with a consideration of subsquasigroups  it is necessary to mention the following.
\begin{thm}[\cite{kepka1978}] 
  A countable quasigroup with at least three members is necessarily isotopic to a quasigroup which has no proper subquasigroups.
  \end{thm}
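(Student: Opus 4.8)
The plan is to realize the desired isotope as a principal isotope of the form $x*y=h(x\cdot g(y))$, where $g,h$ are permutations of $Q$ to be chosen; for any such permutations this is automatically an isotope of $(Q,\cdot)$ (take $\pi=h,\ \pi_1=\mathrm{id},\ \pi_2=g^{-1}$ in \eqref{isot}), so the entire problem reduces to the choice of the pair $(g,h)$. First I would reduce the target property to two manageable families of conditions. Since every nonempty subquasigroup containing two distinct elements contains the subquasigroup they generate, $(Q,*)$ has no proper subquasigroup as soon as (A) $*$ has no idempotent, i.e.\ $x*x\ne x$ for all $x$ (this rules out one-element subquasigroups), and (B) every pair of distinct elements $a\ne b$ generates all of $Q$. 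Indeed, a nonempty subquasigroup is either a singleton, excluded by (A), or contains two distinct elements and hence, by (B), equals $Q$.

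To produce a single pair $(g,h)$ satisfying (A) and (B) simultaneously I would argue by genericity in the Polish group $\Sym(Q)\times\Sym(Q)$ with the topology of pointwise convergence, rather than by an explicit priority construction. For a fixed $a$, the set $U_a=\{(g,h)\mid h(a\cdot g(a))\ne a\}$ is open, since membership depends only on the two values $g(a)$ and $h(a\cdot g(a))$. For fixed $a\ne b$ and $c\in Q$, the set $V_{a,b,c}=\{(g,h)\mid c\in\langle a,b\rangle_*\}$ is a union, over all quasigroup words $w$ in two variables, of the clopen conditions $w_*(a,b)=c$, and is therefore open. All these sets are indexed by countable families, so it suffices to prove each is dense; then $\big(\bigcap_a U_a\big)\cap\big(\bigcap_{a\ne b,\,c}V_{a,b,c}\big)$ is comeager and, by the Baire category theorem, nonempty. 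Any $(g,h)$ in this intersection yields an isotope $(Q,*)$ satisfying (A) and (B), hence with no proper subquasigroup. (For $Q$ finite of order $\ge 3$ the same extension lemma, run to completion, produces an explicit table; the substantive case is $Q$ countably infinite.)

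The crux, and the step I expect to be the main obstacle, is the density of the sets $V_{a,b,c}$: given an arbitrary finite partial injection describing $g$ and $h$ on finite sets, I must extend it so that some word in $a,b$ evaluates to the prescribed target $c$. The natural move is to force a single new product $u*v=c$ with $u,v$ already in the partially generated set; writing $u*v=h(u\cdot g(v))$, one chooses a still-undefined input, uses the bijectivity of the left and right translations of the \emph{original} quasigroup $(Q,\cdot)$ to steer $u\cdot g(v)$ to a fresh element $w$, and sets $h(w)=c$. This works verbatim unless $c$ already lies in the finite range of $h$; in that case $c=h(w_0)$ is already an output, and instead I would first manufacture finitely many fresh elements reachable from $\{a,b\}$ (each obtained by sending some new product to a fresh symbol) and then route one of them, via a free value of $g$, so that $u\cdot g(v)=w_0$, giving $u*v=h(w_0)=c$. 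The infinitude of $Q$, together with the bijectivity and sharp transitivity of the rows of the table of $(Q,\cdot)$, guarantees that at each finite stage enough inputs, outputs, and values of $g,h$ remain free to carry this out; density of $U_a$ is the same argument restricted to the single diagonal value, and it is exactly here that the hypothesis $|Q|\ge 3$ enters, since a two-element quasigroup necessarily has an idempotent and the density of $U_a$ fails. Verifying that these finite extensions never collide with the requirement that $g$ and $h$ extend to genuine bijections is the delicate bookkeeping that must be carried out in full.
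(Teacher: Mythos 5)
The paper offers no proof of this statement---it is quoted verbatim from \cite{kepka1978}---so there is nothing internal to compare against; your proposal must stand on its own, and it does not. The framework (principal isotopes $x*y=h(x\cdot g(y))$, reduction to ``no idempotents'' plus ``any two distinct elements generate'', openness of the conditions in $\Sym(Q)\times\Sym(Q)$) is sound, but the Baire category argument collapses at exactly the point you flag as delicate, and in a way that is not repairable on the full product space. The set $D=\{(g,h)\mid \exists a,\ h(a\cdot g(a))=a\}$ is open and \emph{dense}: given any finite partial injections $g_0,h_0$, pick $a$ outside $\mathrm{dom}(g_0)\cup\mathrm{range}(h_0)$, use bijectivity of $v\mapsto a\cdot v$ to pick $v\notin\mathrm{range}(g_0)$ with $a\cdot v\notin\mathrm{dom}(h_0)$, and set $g(a)=v$, $h(a\cdot v)=a$. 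Hence your set $\bigcap_a U_a$ is the complement of a dense open set, i.e.\ closed and nowhere dense: the \emph{generic} principal isotope has idempotents (infinitely many), and each $U_a$ individually fails to be dense, since a basic open set may already pin down $g(a)=v$ and $h(a\cdot v)=a$. So the target set of ``good'' pairs is contained in a meager set, and no genericity argument over $\Sym(Q)\times\Sym(Q)$ can produce a point of it.

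To salvage the idea you would have to relativize: show the $G_\delta$ set $X=\bigcap_a U_a$ is nonempty (already a nontrivial claim, e.g.\ for $(\mathbb Z,-)$, where $a\cdot a$ is constant), equip it with its Polish topology, and prove each $V_{a,b,c}$ is dense \emph{in $X$}---which means extending finite conditions while never committing to an idempotent, and verifying that the extension can always be completed to an idempotent-free pair. That is precisely the bookkeeping of the explicit back-and-forth/priority construction you set out to avoid; the genericity shortcut does not exist here. Two smaller points: closure of $\langle a,b\rangle_*$ must be taken under $*$ and both divisions (for infinite $Q$, closure under $*$ alone does not give a subquasigroup, cf.\ Proposition~\ref{sq} which needs finiteness), so your extension steps must also track $g^{-1},h^{-1}$; and the finite case is not a degenerate instance of the same lemma but a separate combinatorial argument, since ``fresh elements'' run out.
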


Consider associative triples $x,y,z$ in the quasigroup ${ Q=\{x\mid x\in \F_q\}}$ with multiplication \eqref{qsq} from Corollary \ref{corConst}. It means that 
$(x*y)*z=x*(
y*z)$. 
Then we  have
\begin{multline*}
(1-\beta)\left((1-\beta)x^m+\beta y+c\right)^{m}+ \beta z+c = \\
(1-\beta)x^m+\beta \left((1-\beta)y^m+\beta z+c\right)+c,
\end{multline*}

$$
\beta(1-\beta) z = (1-\beta) x^m +\beta(1-\beta) y^m -(1-\beta)[(1-\beta)x^m+\beta y+c]^m+ \beta c
$$
Since $\beta(1-\beta)\ne 0$ we obtain,
$$
z = \frac{x^m}{\beta}+y^m-\frac{[(1-\beta)x^m+\beta y+c]^m}{\beta} +\frac{c}{1-\beta}.
$$
 So the pair $x,y$ uniquely determines $z$. Hence we prove  the following result.
 \begin{thm} The number of associative triples in the quasigroup $Q$ constructed from 
 Corollary \ref{corConst}  is  equal to $q^2=|Q|^2.$
 \end{thm}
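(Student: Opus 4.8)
The plan is to translate the associativity condition $(x*y)*z = x*(y*z)$ into a single algebraic equation in $\F_q$ and then to observe that this equation is \emph{affine} in the variable $z$ with a nonzero leading coefficient, so that $z$ is forced to be a determined function of the pair $(x,y)$. First I would substitute the explicit multiplication $x*y = (1-\beta)x^m + \beta y + c$ into both sides. The structural feature to exploit is that $x*y$ is a polynomial of degree one in its right argument. On the left-hand side the variable $z$, being the right argument of the outer product, enters only through the term $\beta z$; on the right-hand side $z$ is the right argument of the inner product $y*z$, and the value $y*z$ is again placed in the right slot of the outer product, so $z$ enters through $\beta^2 z$. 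Every remaining term, including the complicated $m$-th power $[(1-\beta)x^m+\beta y + c]^m$, depends only on $x$ and $y$ and carries no $z$.

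Collecting the $z$-terms, the associativity equation takes the form $\beta(1-\beta)\,z = F(x,y)$, where $F(x,y)$ is precisely the expression displayed just before the statement. The crucial point, which I would verify next, is that the coefficient $\beta(1-\beta)$ is nonzero: indeed $\beta \ne 0$ since $\beta \in \F_q^*$, and $\beta \ne 1$ since $\beta$ generates the cyclic group $\F_q^*$, whose order $q-1$ exceeds $1$ because $q>2$. Hence I would divide by $\beta(1-\beta)$ to obtain the unique value $z = F(x,y)/(\beta(1-\beta))$, matching the closed form already recorded in the excerpt.

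Finally I would count. The argument shows that for every one of the $q^2$ ordered pairs $(x,y)\in \F_q\times \F_q$ there is exactly one $z$ making $(x,y,z)$ an associative triple, and distinct pairs obviously yield distinct triples. This sets up a bijection between $\F_q\times\F_q$ and the set of associative triples, so the latter has cardinality $q^2 = |Q|^2$. There is no genuine obstacle beyond two bookkeeping checks: confirming that the $m$-th power term contributes nothing to the coefficient of $z$, and confirming the nonvanishing of $\beta(1-\beta)$. The essential mechanism driving the whole result is the linearity of the quasigroup operation in its right argument, which collapses the associativity condition to a single nondegenerate linear equation in $z$.
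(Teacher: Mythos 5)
Your proposal is correct and follows essentially the same route as the paper: expand both sides of $(x*y)*z=x*(y*z)$, observe that $z$ appears only linearly with net coefficient $\beta(1-\beta)\ne 0$, solve uniquely for $z$ in terms of $(x,y)$, and count $q^2$ triples. The paper's proof is exactly this computation, so there is nothing to add.
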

 
\section{Conclusion}

Identifying the polynomially complete quasigroups with no proper subquasigroup is essential for the design and analysis of the quasigroup based crypto primitives and schemes. In this context, we have developed an effective algorithm that concludes if a given quasigroup has no proper subquasigroup, otherwise lists its all proper subquasigroups. We have constructed a class of quasigroups of order $p^r$ by defining a binary operation based on arithmetic of finite fields. Further, we have given the criteria under which the quasigroups of this class are polynomially complete and have no proper subquasigroup. This work provides an effective technique to construct cryptographically suitable quasigroups. We have also demonstrated the effectiveness of our methods by their implementations in Singular on some examples.

In the future work, we aim to construct cryptographic S-boxes by using quasigroups of class presented herein, and then analyze their cryptographic properties with respect to the S-boxes based on arbitrarily chosen quasigroups.
Moreover, we will also compare the cryptographic properties of these S-boxes with the benchmark S-boxes of AES\cite{aes}.

\section{Acknowledgement} 
We are thankful to  Ms. Anu Khosla, Director SAG, DRDO and Dr. Sudhir Kamath, DG, MED$\&$CoS, DRDO  for their supports and encouragements to carry out this collaborative research work. 
 Authors are also thankful to all the team members of Indo-Russian joint project QGSEC for their technical supports and scientific discussions.

\bibliographystyle{spbasic}
\bibliography{ConstructionOfCryptographicallySuitableQuasigroups}

\begin{thebibliography}{26}
\providecommand{\natexlab}[1]{#1}
\providecommand{\url}[1]{{#1}}
\providecommand{\urlprefix}{URL }
\expandafter\ifx\csname urlstyle\endcsname\relax
  \providecommand{\doi}[1]{DOI~\discretionary{}{}{}#1}\else
  \providecommand{\doi}{DOI~\discretionary{}{}{}\begingroup
  \urlstyle{rm}\Url}\fi
\providecommand{\eprint}[2][]{\url{#2}}

\bibitem[{{Artamonov}(2012)}]{Artamonov2012}
{Artamonov} VA (2012) {Polynomially complete algebras.} {Series Natural Tech
  Med Sci} 6(2):23--29

\bibitem[{{Artamonov}(2020)}]{Artamonov2020}
{Artamonov} VA (2020) { Automorphisms of finite quasigroups with no proper no
  subquasigroups.} {Vestnik StPetersbourg university, Mathematics, Mechanics,
  Astronomy}

\bibitem[{{Artamonov} et~al.(2013){Artamonov}, {Chakrabarti}, {Gangopadhyay},
  and {Pal}}]{Artamonov2013}
{Artamonov} VA, {Chakrabarti} S, {Gangopadhyay} S, {Pal} SK (2013) {On Latin
  squares of polynomially complete quasigroups and quasigroups generated by
  shifts.} {Quasigroups Relat Syst} 21(2):117--130

\bibitem[{{Artamonov} et~al.(2016){Artamonov}, {Chakrabarti}, and
  {Pal}}]{Artamonov2016}
{Artamonov} VA, {Chakrabarti} S, {Pal} SK (2016) {Characterization of
  polynomially complete quasigroups based on Latin squares for cryptographic
  transformations.} {Discrete Appl Math} 200:5--17

\bibitem[{{Artamonov} et~al.(2017){Artamonov}, {Chakrabarti}, and
  {Pal}}]{Artamonov2017}
{Artamonov} VA, {Chakrabarti} S, {Pal} SK (2017) {Characterizations of highly
  non-associative quasigroups and associative triples.} {Quasigroups Relat
  Syst} 25(1):1--19

\bibitem[{{Belyavskaya}(1989)}]{Belyavskaya1989}
{Belyavskaya} GB (1989) {T-quasi-groups and the center of a quasi-group.} {Mat
  Issled} 111:24--43

\bibitem[{{Belyavskaya}(1994)}]{Belyavskaya1994}
{Belyavskaya} GB (1994) {Abelian quasigroups are \(T\)-quasigroups.}
  {Quasigroups Relat Syst} 1(1):1--7

\bibitem[{{Belyavskaya} and {Tabarov}(1992)}]{Belyavskaya1992}
{Belyavskaya} GB, {Tabarov} AK (1992) {Characteristic of linear and alinear
  quasigroups.} {Diskretn Mat} 4(2):142--147

\bibitem[{Daemen and Rijmen(2002)}]{aes}
Daemen J, Rijmen V (2002) The Design of Rijndael: AES - The Advanced Encryption
  Standard (Information Security and Cryptography), 1st edn. Springer

\bibitem[{Decker et~al.(2019)Decker, Greuel, Pfister, and
  Sch\"onemann}]{singular}
Decker W, Greuel GM, Pfister G, Sch\"onemann H (2019) {\sc Singular} {4-1-2}
  --- {A} computer algebra system for polynomial computations.
  \url{http://www.singular.uni-kl.de}

\bibitem[{{D\'enes} and {Keedwell}(1991)}]{Denes1991}
{D\'enes} J, {Keedwell} AD (eds)  (1991) {Latin squares. New developments in
  the theory and applications.}, vol~46. Amsterdam etc.: North-Holland

\bibitem[{Dimitrova and Markovski(2004)}]{Dimitrova04}
Dimitrova V, Markovski J (2004) On quasigroup pseudo random sequence generator.
  In: Proc. of the 1-st Balkan Conference in Informatics, Thessaloniki, pp
  393--401

\bibitem[{{Galatentko} et~al.(2018){Galatentko}, {Pankrat'ev}, and
  {Rodin}}]{Galatentko2018}
{Galatentko} AV, {Pankrat'ev} AE, {Rodin} SB (2018) {Polynomially complete
  quasigroups of prime order.} {Algebra Logic} 57(5):327--335

\bibitem[{{Gro\v{s}ek} and {Hor\'ak}(2012)}]{Otokar2012}
{Gro\v{s}ek} O, {Hor\'ak} P (2012) {On quasigroups with few associative
  triples.} {Des Codes Cryptography} 64(1-2):221--227

\bibitem[{{Hagemann} and {Herrmann}(1982)}]{Hagemann1982}
{Hagemann} J, {Herrmann} C (1982) {Arithmetical locally equational classes and
  representation of partial functions.} {Universal algebra, Proc. Colloq.,
  Esztergom/Hung. 1977, Colloq. Math. Soc. Janos Bolyai 29, 345-360 (1982).}

\bibitem[{{Horv\'ath} et~al.(2008){Horv\'ath}, {Nehaniv}, and
  {Szab\'o}}]{Horv2008}
{Horv\'ath} G, {Nehaniv} CL, {Szab\'o} C (2008) {An assertion concerning
  functionally complete algebras and NP-completeness.} {Theor Comput Sci}
  407(1-3):591--595

\bibitem[{{Keedwell} and {D\'enes}(2015)}]{Keedwell2015}
{Keedwell} AD, {D\'enes} J (2015) {Latin squares and their applications. 2nd
  ed.}, 2nd edn. Amsterdam: Elsevier

\bibitem[{{Kepka}(1978)}]{kepka1978}
{Kepka} T (1978) {A note on simple quasigroups.} {Acta Univ Carol, Math Phys}
  19(2):59--60

\bibitem[{{Kepka} and {Nemec}(1971)}]{kepka1971}
{Kepka} T, {Nemec} P (1971) {Affine quasigroups.} {Acta Univ Carol, Math Phys}
  12(1):39--49

\bibitem[{{Lidl} and {Niederreiter}(1996)}]{Lidl1996}
{Lidl} R, {Niederreiter} H (1996) {Finite fields. 2nd ed.}, vol~20, 2nd edn.
  Cambridge: Cambridge Univ. Press

\bibitem[{{Markovski} et~al.(1997){Markovski}, {Gligoroski}, and
  {Andova}}]{smile1997}
{Markovski} S, {Gligoroski} D, {Andova} S (1997) {Using quasigroups for one-one
  secure encoding.} In: {Proceedings of the VIII international conference on
  logic and computer science: theoretical foundations of computer science, Lira
  '97, Novi Sad, Yugoslavia, September 1--4, 1997}, Novi Sad: Univ. of Novi
  Sad, Faculty of Science, Institute of Mathematics, pp 157--162

\bibitem[{{Markovski} et~al.(1999){Markovski}, {Gligoroski}, and
  {Bakeva}}]{Markovski1999}
{Markovski} S, {Gligoroski} D, {Bakeva} V (1999) {Quasigroup string
  processing-part 1.} {Contributions, Sec Math Tech Sci MANU} XX:13--28

\bibitem[{Phillips and Smith(1999)}]{Phillips}
Phillips J, Smith J (1999) Quasiprimitivity and quasigroups. Bulletin of the
  Australian Mathematical Society 59(3):473–475,
  \doi{10.1017/S0004972700033165}

\bibitem[{{Shcherbacov}(2017)}]{viktor2017}
{Shcherbacov} VA (2017) {Elements of quasigroup theory and applications.} Boca
  Raton, FL: CRC Press

\bibitem[{{Smith}(2007)}]{smith2007}
{Smith} JDH (2007) {An introduction to quasigroups and their representations.}
  Boca Raton, FL: Chapman \& Hall/CRC

\bibitem[{Wall and Drury(1957)}]{Wall1957}
Wall W, Drury W (1957) Subquasigroups of finite quasigroup. {Pacific J of
  Mathematics} 7(4):1711--1714

\end{thebibliography}

\end{document}